\theoremstyle{change}  
\newtheorem{theorem}{Theorem}[section] 
\newtheorem{lemma}[theorem]{Lemma}  
\newtheorem{proposition}[theorem]{Proposition}
\newtheorem{corollary}[theorem]{Corollary}
\newtheorem{remark}[theorem]{Remark}
\newtheorem{notation}[theorem]{Notation}
\newtheorem{nothing}[theorem]{} 
\newenvironment{proof}{\noindent{\bf Proof}\ }{\qed\bigskip}
\renewcommand{\le}{\leqslant} 
\renewcommand{\ge}{\geqslant}
\newcommand{\abar}{\bar{a}}
\def\Aut{\mathrm{Aut}}
\newcommand{\bbar}{\bar{b}}
\newcommand{\calF}{\mathcal{F}}
\newcommand{\calP}{\mathcal{P}}
\newcommand{\catfont}{\mathsf}
\newcommand{\FF}{\mathbb{F}}
\newcommand{\GL}{\mathrm{GL}}
\newcommand{\id}{\mathrm{id}}
\newcommand{\Inn}{\mathrm{Inn}}
\newcommand{\lexp}[2]{\setbox0=\hbox{$#2$} \setbox1=\vbox to
                 \ht0{}\,\box1^{#1}\!#2}
\newcommand{\lMod}[1]{\llap{\phantom{|}}_{#1}\catfont{Mod}}
\newcommand{\Out}{\mathrm{Out}}
\newcommand{\PGL}{\mathrm{PGL}}
\newcommand{\PIM}{\mathrm{PIM}}
\newcommand{\PSL}{\mathrm{PSL}}
\newcommand{\Pu}{P\langle u\rangle}
\newcommand{\qed}{\nobreak\hfill
                  \vbox{\hrule\hbox{\vrule\hbox to 5pt
                  {\vbox to 8pt{\vfil}\hfil}\vrule}\hrule}}
\newcommand{\Qv}{Q\langle v \rangle}
\newcommand{\SL}{\mathrm{SL}}
\newcommand{\ZZ}{\mathbb{Z}}
\newcounter{parag}[section]
\title{Functorial equivalence classes of $2$-blocks of tame representation type}
\author{Robert Boltje, Serge Bouc, and Deniz Y\i lmaz}
\begin{document}
\sloppy
\maketitle

\begin{abstract} 
For any block of a finite group over an algebraically closed field of characteristic $2$ which has dihedral, semidihedral, or generalized quaternion defect groups, we determine explicitly the decomposition of the associated diagonal $p$-permutation functor over an algebraically closed field $\FF$ of characteristic $0$ into a direct sum of simple functors. As a consequence we see that two blocks with dihedral, semidihedral, or generalized quaternion defect groups are functorially equivalent over $\FF$ if and only if their fusion systems are isomorphic. It is an open question if two blocks (with arbitrary defect groups) that are functorially equivalent over $\FF$ must have isomorphic fusion systems. The converse is wrong in general.
\end{abstract}

{\flushleft{\bf MSC2020:}} 20C20, 20J15, 19A22. 
{\flushleft{\bf Keywords:}} blocks of group algebras, defect groups, fusion systems, diagonal $p$-permutation functors, functorial equivalences.

\section{Introduction}
Let $p$ be a prime, $k$ an algebraically closed field of characteristic $p$, and $\FF$ an algebraically closed field of characteristic zero.

\smallskip
In \cite{BoucYilmaz2022}, the notion of a {\em diagonal $p$-permutation functor} over an arbitrary commutative ring $R$ was defined. It is by definition an $R$-linear functor from the category $R pp_k^\Delta$ to the category of $R$-modules. The objects of the category $R pp_k^\Delta$ are finite groups, and a morphism from $H$ to $G$ is an element in $R T^\Delta(kG,kH):=R\otimes_\ZZ T^\Delta(kG,kH)$, where $T^\Delta(kG,kH)$ is the Grothendieck group (with respect to split exact sequences) of the category of finitely generated $p$-permutation $(kG,kH)$-bimodules which are projective as left $kG$-modules and as right $kH$-modules. Composition of morphisms is induced by the tensor product of bimodules. We refer the reader to \cite{BoucYilmaz2020} and \cite{BoucYilmaz2022} for further notations and definitions related to this category. Diagonal $p$-permutation functors over $R$ form an abelian and $R$-linear category. \par
A {\em group-block pair} (over $k$) is by definition a pair $(G,b)$ of a finite group $G$ and a block idempotent $b$ of $kG$. A {\em defect group} of such a pair $(G,b)$ is a defect group of $b$ in $G$, and the fusion system of $(G,b)$ is the fusion system of $b$ in $kG$. To every group-block pair $(G,b)$ (over $k$), one can assign a diagonal $p$-permutation functor $R T^\Delta_{(G,b)}$ over $R$, see \cite{BoucYilmaz2022}. Two group-block pairs $(G,b)$ and $(H,c)$ are called {\em functorially equivalent} over $R$ if the functors $R T^\Delta_{(G,b)}$ and $R T^\Delta_{(H,c)}$ are isomorphic.

\smallskip
If $R=\FF$, it was shown in \cite{BoucYilmaz2022} that the category of diagonal $p$-permutation functors is semisimple, that two group-block pairs, which are functorially equivalent over $\FF$, have isomorphic defect groups, and that for a given finite $p$-group $D$ there are only finitely many functorial equivalence classes of group-block pairs with defect group isomorphic to~$D$.

\smallskip
In \cite{Yilmaz2024} it was shown that if $(G,b)$ and $(H,c)$ are group-block pairs with cyclic defect groups or $2$-blocks of defect $2$ or $3$, then $(G,b)$ and $(H,c)$ are functorially equivalent over~$\FF$ if and only if they have isomorphic fusion systems. The goal of this paper is to show a similar result for group-block pairs with dihedral, generalized quaternion and semidihedral defect groups.

\begin{theorem}\label{thm main}
Let $(G,b)$ and $(H,c)$ be group-block pairs with dihedral, generalized quaternion, or semidihedral defect groups. Then, $(G,b)$ and $(H,c)$ are functorially equivalent over $\FF$ if and only if they have isomorphic fusion systems.
\end{theorem}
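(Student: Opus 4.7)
The plan is to leverage the semisimplicity of the category of diagonal $p$-permutation functors over $\FF$ established in \cite{BoucYilmaz2022}: by this result, the functors $\FF T^\Delta_{(G,b)}$ and $\FF T^\Delta_{(H,c)}$ are isomorphic if and only if each simple functor occurs in them with the same multiplicity. Accordingly, the strategy is to compute, for every group-block pair $(G,b)$ whose defect group $D$ is dihedral, semidihedral, or generalized quaternion, the full decomposition of $\FF T^\Delta_{(G,b)}$ into simple functors, and then to verify that the resulting multiplicity data depends only on, and in fact determines, the fusion system of $(G,b)$. As in the cyclic and small-defect cases treated in \cite{Yilmaz2024}, the two implications of the theorem then follow by inspection: the ``if'' direction because pairs with the same fusion system produce the same multiplicities, and the ``only if'' direction because distinct fusion systems produce distinct multiplicities.

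To carry this out, I would first invoke the classification of saturated fusion systems on the three families of $2$-groups in question, which leaves only a short explicit list to analyze. Next, I would reduce the computation to a small number of standard representatives of each fusion system, exploiting the fact that source algebra (and a fortiori splendid Morita) equivalences between blocks induce isomorphisms of the associated diagonal $p$-permutation functors. Combined with the classification of blocks of tame representation type by Erdmann, Linckelmann, and others, this should allow every block with such defect group to be replaced by an explicit ``standard'' block---typically the principal block of a small group such as $D$ itself, $A_4$, $A_5$, $A_7$, or a suitable $\PSL(2,q)$---realizing the same fusion system. This reduction already yields the ``if'' direction of the theorem and reduces the remaining work to a finite case analysis.

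For the ``only if'' direction, one then computes, for each standard representative, the multiplicity of every simple diagonal $p$-permutation functor in $\FF T^\Delta_{(G,b)}$ using the character-theoretic and species-level formulas developed in \cite{BoucYilmaz2022,BoucYilmaz2020}. Since the Brauer pair structure of a tame block is completely governed by its fusion system, these multiplicities should organize into tables indexed by pairs $(L,W)$ with $L$ a section of~$D$ and $W$ an irreducible $\FF\Out(L)$-module, and the tables attached to non-isomorphic fusion systems should differ. The main obstacle will be exactly this bookkeeping step: there are many relevant pairs $(L,W)$ for each of the three families, each multiplicity must be evaluated individually by tracing through Brauer pair data and character values on $p$-regular elements, and one must then verify that the resulting vectors genuinely separate \emph{all} fusion systems, not merely most of them. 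A secondary technical point will be to confirm at the outset that every fusion system on a dihedral, semidihedral, or generalized quaternion $2$-group is actually realized by one of the chosen standard blocks, so that no functorial equivalence class is missed in the reduction.
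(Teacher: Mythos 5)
Your overall strategy---use semisimplicity, compute all multiplicities of simple functors, and show that the multiplicity vector depends only on, and determines, the fusion system---is exactly the paper's strategy, and your ``only if'' argument via the count $m(S_{1,1,\FF},\cdot)=l(kGb)$ separating fusion systems is also the paper's. However, your proposed reduction step does not work and is not how the paper proceeds. You want to replace an arbitrary block with the given defect group by a ``standard'' block realizing the same fusion system, using the Erdmann--Linckelmann classification of tame blocks together with the fact that splendid Morita equivalence preserves the functor $\FF T^\Delta_{(G,b)}$. The difficulty is that the tame classification is only complete up to ordinary Morita equivalence, not up to \emph{splendid} Morita equivalence, and even at the Morita level it is incomplete in a precise sense: for generalized quaternion defect groups there is a candidate Morita equivalence class (with two simple modules) for which the existence of blocks, and their splendid equivalence to any standard representative, is open. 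So you cannot currently reduce every such block to a standard one in a way that preserves $\FF T^\Delta$; the ``if'' direction does not follow from this reduction.

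The paper sidesteps the reduction entirely. It applies the multiplicity formula of \cite[Theorem~8.22(b)]{BoucYilmaz2022} (Theorem~\ref{thm mult formula} in the paper) directly to an \emph{arbitrary} block with the given defect group. That formula expresses $m(S_{L,u,V},\FF T^\Delta_{(G,b)})$ in terms of Brauer pair data: the fusion system $\calF$ together with the local numbers $l\bigl(kC_G(P)e_P\bigr)$ for $P\le D$. The technical core (Lemmas~\ref{lem 1}--\ref{lem 4} and Propositions~\ref{prop dihedral}, \ref{prop quaternion}, \ref{prop semidihedral}) is to show, using Alperin's fusion theorem, nilpotency of blocks with cyclic defect at $p=2$, and classical results of Brauer and Olsson, that $l\bigl(kC_G(P)e_P\bigr)=1$ for the relevant proper subgroups $P$, so the multiplicities collapse to combinatorial data of $\calF$ alone. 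This makes the result unconditional and independent of any classification of tame blocks. If you wanted to salvage your plan, you would at minimum have to isolate this local reduction and prove it directly, at which point you would essentially be reproducing the paper's argument.
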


Theorem~\ref{thm main} is proved by computing explicitly the multiplicities of the simple functors occurring as direct summands in the various block functors in each case, see Propositions~\ref{prop dihedral}, \ref{prop quaternion} and \ref{prop semidihedral}.  In this sense, we prove much more than what is necessary to prove Theorem~\ref{thm main}. We believe that these explicit formulas will be useful in the future. 

\smallskip
The saturated fusion systems for $2$-groups of dihedral, generalized quaternion, and semidihedral type are known and described in \cite[Example~I.3.8]{AKO2011}. Up to isomorphism, there exist precisely three such fusion systems on dihedral groups, two on the generalized quaternion group of order $8$, three on those of order $2^n\ge 16$, and four on semidihedral groups. All of them are realized by principal blocks of various groups. For details see Sections 4--6, where Theorem~\ref{thm main} is proved separately for these three types of groups. Together with the results in \cite{Yilmaz2024}, Theorem~\ref{thm main} implies the following corollary.

\begin{corollary}\label{cor main}
Let $(G,b)$ and $(H,c)$ be group-block pairs such that $kGb$ and $kHc$ have tame representation type. Then $(G,b)$ and $(H,c)$ are functorially equivalent over $\FF$ if and only if they have isomorphic fusion systems.
\end{corollary}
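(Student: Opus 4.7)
The plan is to reduce Corollary~\ref{cor main} directly to Theorem~\ref{thm main} together with the cyclic-defect part of \cite{Yilmaz2024}, by invoking the classical classification of blocks of tame representation type in order to split the problem into two disjoint subcases.

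First, I would recall the well-known trichotomy (due to Bondarenko--Drozd, with the refined structural description by Erdmann and others): a block of a finite group algebra $kG$ has tame representation type, understood here in the inclusive sense that covers both strictly tame blocks and blocks of finite representation type, if and only if either its defect groups are cyclic, or else $p=2$ and its defect groups are dihedral (including the Klein four group of order~$4$ as the dihedral group of that order), semidihedral, or generalized quaternion. This puts the hypothesis of the corollary on $(G,b)$ and on $(H,c)$ into one of these two mutually exclusive subclasses.

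Next, I would invoke the fact recalled in the introduction, namely that two group-block pairs functorially equivalent over $\FF$ have isomorphic defect groups (shown in \cite{BoucYilmaz2022}), together with the trivial observation that two group-block pairs with isomorphic fusion systems in particular have isomorphic defect groups. Consequently, under either side of the biconditional to be proved, the pairs $(G,b)$ and $(H,c)$ must belong to the same subclass of the tame classification; in particular, if one is tame then so is the other, and both have defect groups of the same type.

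Finally, I would close each subcase by direct invocation. In the cyclic-defect subcase the biconditional is exactly the corresponding result of \cite{Yilmaz2024}; in the dihedral/semidihedral/generalized quaternion subcase it is precisely Theorem~\ref{thm main}. I do not anticipate a genuine obstacle here: the content of Corollary~\ref{cor main} is a bookkeeping consequence of Theorem~\ref{thm main}, the cyclic-defect result of \cite{Yilmaz2024}, and the classification of tame blocks; the only thing worth being slightly careful about is the convention under which ``tame representation type'' is meant to cover the cyclic (finite type) case as well, so that the statement is genuinely more than a restatement of Theorem~\ref{thm main}.
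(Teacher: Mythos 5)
Your proposal matches the paper's own route exactly: the paper derives the corollary as an immediate consequence of Theorem~\ref{thm main} together with the results of \cite{Yilmaz2024}, via the classification of blocks of tame representation type, and you have reconstructed precisely that argument including the remark that functorial equivalence forces isomorphic defect groups, so both pairs land in the same branch of the classification.

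The only small caveat concerns the Klein four case. You file $V_4$ under ``dihedral of order $4$'' and then send it to Theorem~\ref{thm main}, but the proof of that theorem (Section~4, Proposition~\ref{prop dihedral}) is carried out only for $D_{2^n}$ with $n\ge 3$, i.e.\ order at least $8$; indeed the introduction's count of ``precisely three'' saturated fusion systems on dihedral groups is wrong for $V_4$, which has only two, so the paper is not treating $V_4$ as dihedral. The $V_4$ case is instead covered by the $2$-blocks of defect $2$ result of \cite{Yilmaz2024}, which you should cite alongside the cyclic-defect part; as stated, invoking only the ``cyclic-defect part'' of \cite{Yilmaz2024} leaves $V_4$ unaccounted for. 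With that one citation adjusted, your bookkeeping is complete and coincides with the paper's.
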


It is an open question if arbitrary group-block pairs that are functorially equivalent over $\FF$ must have isomorphic fusion systems. The converse is wrong in general, since there exist blocks with isomorphic fusion systems but different numbers of simple module, while the number of simple modules is determined by the functorial equivalence class over $\FF$, see \cite{Kiyota1984} or \cite[Example IV.5.42]{AKO2011}.

\smallskip
The paper is arranged as follows. In Section~2 we recall some relevant results on diagonal $p$-permutation functors and in Section~3 we prove some results on multiplicities of simple functors in block functors that are specific to the defect groups at hand. In Sections 4--6 we prove the main result for the three types of defect groups mentioned above.


\section{Preliminaries}

We recall some definitions and results from \cite{BoucYilmaz2022} that are relevant for this paper. Recall that $k$ is an algebraically closed field of characteristic $p>0$ and that $\FF$ is an algebraically closed field of characteristic zero.

\begin{nothing}\label{noth prel}
(a) A pair $(P,u)$ where $P$ is a $p$-group and $u\in\Aut(P)$ is an automorphism of $p'$-order (i.e., of order not divisible by $p$) is called a {\em $D^\Delta$-pair}. In this case we write $\Pu$ for the semidirect product $P\rtimes \langle u\rangle$. We say that two $D^\Delta$-pairs $(P,u)$ and $(Q,v)$ are {\em isomorphic} if there exists a group isomorphism $f\colon \Pu\to\Qv$ such that $f(u)$ is $\Qv$-conjugate to~$v$. We denote by $\Aut(P,u)$ the group of automorphisms of the $D^\Delta$-pair $(P,u)$ and by $\Out(P,u)$ the quotient $\Aut(P,u)/\Inn(\Pu)$, see \cite[Notation~6.8]{BoucYilmaz2022}.

\smallskip
(b) Let $G$ and $H$ be finite groups and let $b$ and $c$ be block idempotents of $kG$ and $kH$, respectively. We say that the pairs $(G,b)$ and $(H,c)$ are {\em functorially equivalent over $\FF$}, if the corresponding diagonal $p$-permutation functors $\FF T^\Delta_{(G,b)}$ and $\FF T^\Delta_{(H,c)}$ are isomorphic, see \cite[Definition~10.1]{BoucYilmaz2022}. Here, $\FF T^\Delta_{(G,b)}=\FF T^\Delta(-,kG)\circ{[kGb]}=\FF T^\Delta(-,kGb)$. By \cite[Lemma~10.2]{BoucYilmaz2022}, the pairs $(G,b)$ and $(H,c)$ are functorially equivalent over $\FF$ if and only if there exist $\omega\in\FF T^\Delta(kGb,kHc)$ and $\sigma\in\FF T^\Delta(kHc,kGb)$ such that 
\begin{equation*}
  \omega\cdot_H\sigma = [kGb]\quad\text{in $\FF T^\Delta(kGb,kGb)$}\quad\text{and}\quad  
  \sigma\cdot_G\omega=[kHc]\quad \text{in $\FF T^\Delta(kHc,kHc)$\,.}
\end{equation*}

\smallskip
(c) Recall from \cite[Corollary~6.15]{BoucYilmaz2022} that the category of diagonal $p$-permutation functors over $\FF$ is semisimple. Moreover, the simple diagonal $p$-permutation functors $S_{L,u,V}$, up to isomorphism, are parametrized by the isomorphism classes of triples $(L,u,V)$, where $(L,u)$ is a $D^\Delta$-pair and $V$ is a simple $\FF \Out(L,u)$-module, each up to isomorphism.

\smallskip
(d) Since the category of diagonal $p$-permutation functors over $\FF$ is semisimple, the functor $\FF T^\Delta_{(G,b)}$ is a direct sum of simple diagonal $p$-permutation functors $S_{L,u,V}$. Hence, two group-block pairs $(G,b)$ and $(H,c)$ are functorially equivalent over $\FF$ if and only if, for any triple $(L,u,V)$, the multiplicities of the simple functor $S_{L,u,V}$ in $\FF T^\Delta_{(G,b)}$ and $\FF T^\Delta_{(H,c)}$ are the same. In the next Theorem we recall a formula for these multiplicities and we will need to recall the following notation.

\smallskip
(e) Let $(G,b)$ be a group-block pair and let $(D,e_D)$ be a maximal $(G,b)$-Brauer pair (over $k$). For any subgroup $P\le D$, let $e_P$ be the unique block idempotent of $kC_G(P)$ with $(P,e_P)\le (D,e_D)$. Moreover, let $\calF$ be the fusion system of $(G,b)$ with respect to $(D,e_D)$ and let $[\calF]$ be a set of $\calF$-isomorphism classes of objects of $\calF$, i.e., subgroups of $D$.

Let $(L,u)$ be a $D^\Delta$-pair. For $P\le D$ we set $\calP_{(P,e_P)}(L,u)$ to be the set of group isomorphisms $\pi\colon L\to P$ satisfying $\pi\circ u\circ\pi^{-1}\in\Aut_{\calF}(P)$. Thus, $\calP_{(P,e_P)}(L,u)=\emptyset$ unless $P\cong L$. The set $\calP_{(P,e_P)}(L,u)$ is an $\big(N_G(P,e_P),\Aut(L,u)\big)$-biset  via
\begin{equation*}
   g\cdot \pi\cdot \varphi =  i_g\circ\pi\circ\varphi\,,
\end{equation*}
for $g\in N_G(P, e_P)$, $\pi\in\calP_{(P,e_P)}(L,u)$ and $\varphi\in\Aut(L,u)$, where $i_g$ denotes the conjugation automorphism $x\mapsto gxg^{-1}$.
We denote by $[\calP_{(P,e_P)}(L,u)]$ a set of $N_G(P,e_P)\times\Aut(L,u)$-orbits of $\calP_{(P,e_P)}(L,u)$.

For $\pi\in\calP_{(P,e_P)}(L,u)$, the stabilizer in $\Aut(L,u)$ of the $N_G(P,e_P)$-orbit of $\pi$ is denoted by $\Aut(L,u)_{\overline{(P,e_P,\pi)}}$. Thus,
\begin{equation*}
   \Aut(L,u)_{\overline{(P,e_P,\pi)}} = \{\varphi\in\Aut(L,u)\mid \pi\varphi\pi^{-1} \in \Aut_{\calF}(P)\}\,.
\end{equation*}

Finally, for $P\le D$, $(L,u)$, and $\pi\in\calP_{(P,e_P)}(L,u)$ as above, we denote by $\PIM\big(kC_G(P)e_P,u\big)$ the set of isomorphism classes of projective indecomposable $kC_G(P)e_P$-modules that are fixed under $\pi u \pi^{-1}$. Further, we denote by $\FF \PIM\big(kC_G(P)e_P,u\big)$ its $\FF$-linear span. Note that $\Aut(L,u)_{\overline{(P,e_P,\pi)}}$ acts on $\PIM\big(kC_G(P)e_P,u\big)$ via $U\cdot \varphi:=\lexp{g}{U}$, where $g\in N_G(P,e_P)$ such that $i_g\pi\varphi = \pi$.
\end{nothing}

\begin{theorem}(\cite[Theorem~8.22(b)]{BoucYilmaz2022})\label{thm mult formula}
Let $(G,b)$ be a block, $(L,u)$ a $D^\Delta$-pair and $V$ an irreducible $\FF\Out(L,u)$-module. The multiplicity of the simple diagonal $p$-permutation functor $S_{L,u,V}$ in the functor $\FF T^\Delta_{(G,b)}$ is equal to the $\FF$-dimension of
\begin{equation*}
  \bigoplus_{P\in[\calF]}\ \bigoplus_{\pi\in[\calP_{(P,e_P)}(L,u)]}  \,\FF\PIM\big(kC_G(P)e_P,u\big)\otimes_{\Aut(L,u)_{\overline{(P,e_P,\pi)}}} V\,.
\end{equation*}
\end{theorem}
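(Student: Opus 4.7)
The approach is to combine the semisimplicity of the category of diagonal $p$-permutation functors over $\FF$ (item (c)) with evaluation at the $D^\Delta$-pair $(L,u)$ and passage to an essential quotient. Since $V$ is absolutely simple over $\FF$, Schur's lemma yields that the multiplicity $m$ of $S_{L,u,V}$ in $F := \FF T^\Delta_{(G,b)}$ equals $\dim_\FF \Hom(S_{L,u,V}, F)$. Using the general structure of simple functors parametrized by $D^\Delta$-pairs, this identifies with the multiplicity of $V$ as an $\FF\Out(L,u)$-module in a canonical essential quotient $\overline F(L,u)$ of $F(\Lu) = \FF T^\Delta(k\Lu, kGb)$, namely the cokernel of the map $\bigoplus F(\Qv) \to F(\Lu)$ summed over morphisms in $\FF pp_k^\Delta$ that factor through a $D^\Delta$-pair $(Q,v)$ with $|\Qv| < |\Lu|$.

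The second step is to compute $F(\Lu)$ and its $\Aut(L,u)$-action explicitly using the parametrization of indecomposable $p$-permutation $(kGb, k\Lu)$-bimodules that are projective on both sides. Each such indecomposable is determined by a ``twisted diagonal'' vertex of the form $\{(\pi(x),x) \mid x \in R\}$ for a subgroup $R \le \Lu$ and an injective homomorphism $\pi: R \hookrightarrow G$, together with an indecomposable projective source over the relevant Brauer correspondent. Combining this with the $(G,b)$-Brauer pair machinery, and using that $\pi$ must respect the block idempotents (which forces $\pi(R \cap L)$ into $D$ up to $G$-conjugation), one sees that $(G,b)$-conjugacy classes of such triples correspond to: a subgroup $P \le D$ (up to $\calF$-isomorphism), a class $\pi \in [\calP_{(P,e_P)}(L,u)]$, and a projective module in $\PIM(kC_G(P)e_P, u)$.

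Third, passing to the essential quotient $\overline F(L,u)$ kills the contributions with $R \lneq \Lu$ and leaves only the ``full-rank'' terms. Carrying along the $\Aut(L,u)$-action dictated by the $N_G(P,e_P) \times \Aut(L,u)$-biset structure on $\calP_{(P,e_P)}(L,u)$, and the induced action $U \cdot \varphi = \lexp{g}{U}$ on $\PIM(kC_G(P)e_P, u)$, one obtains an isomorphism
\[
  \overline F(L,u) \;\cong\; \bigoplus_{P \in [\calF]}\ \bigoplus_{\pi \in [\calP_{(P,e_P)}(L,u)]} \Ind_{\Aut(L,u)_{\overline{(P,e_P,\pi)}}}^{\Aut(L,u)} \FF\PIM\bigl(kC_G(P)e_P, u\bigr)
\]
of $\FF\Aut(L,u)$-modules; since $\Inn(\Lu)$ acts trivially on both sides, the isomorphism descends to $\FF\Out(L,u)$. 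Applying Frobenius reciprocity on each summand together with the character-theoretic identity $\dim_\FF \Hom_H(V, M) = \dim_\FF (M \otimes_{\FF H} V)$, valid over fields of characteristic zero, then produces the stated dimension formula.

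The main obstacle is the careful identification and handling of the essential quotient while simultaneously tracking the $\Aut(L,u)$-action. Concretely, one must verify (i) that the images of $F(\Qv)$ for $|\Qv| < |\Lu|$ span precisely the non-essential subspace, (ii) that the full-rank summands remain linearly independent after this quotient and are labelled by the claimed orbits, and (iii) that the action $U \cdot \varphi = \lexp{g}{U}$ is well-defined independently of the choice of $g$ satisfying $i_g\pi\varphi = \pi$, which boils down to the fact that the ambiguity in $g$ lies in $C_G(P)$ and therefore acts trivially on isomorphism classes of $kC_G(P)e_P$-modules. The orbit bookkeeping of the $N_G(P,e_P) \times \Aut(L,u)$-action on $\calP_{(P,e_P)}(L,u)$ is precisely what produces the isotropy subgroups $\Aut(L,u)_{\overline{(P,e_P,\pi)}}$ appearing in the final induction formula.
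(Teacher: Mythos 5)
The paper does not prove Theorem~\ref{thm mult formula}; it is quoted from \cite[Theorem~8.22(b)]{BoucYilmaz2022} as background, so there is no in-paper argument to compare against. Your sketch does reproduce the overall strategy of that reference: Schur's lemma together with semisimplicity reduces the multiplicity to a $\Hom$ computation; evaluation at $\Lu$ and passage to the essential quotient isolates the $(L,u)$-contribution; the classification of indecomposable $p$-permutation $(kGb,k\Lu)$-bimodules with projective restrictions by twisted-diagonal vertices, Brauer pairs and projective sources produces the decomposition of the essential quotient into induced $\FF\Out(L,u)$-modules indexed by $[\calF]$ and $[\calP_{(P,e_P)}(L,u)]$; and Frobenius reciprocity then yields the stated dimension formula.

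Two small points worth making explicit. First, the identity $\dim_\FF\Hom_H(V,M)=\dim_\FF(M\otimes_{\FF H}V)$ a priori computes the multiplicity of $V^*$ rather than $V$; here $\FF\PIM\big(kC_G(P)e_P,u\big)$ is a permutation module (it is the linearization of a permutation action on a finite set of isomorphism classes) and hence self-dual, so the dimension is unaffected, but this should be said. Second, your well-definedness argument for the action $U\cdot\varphi=\lexp{g}{U}$ is correct: two choices of $g$ with $i_g\pi\varphi=\pi$ differ by an element of $C_G(P)$, and conjugation by $C_G(P)$ is inner on $kC_G(P)e_P$, hence trivial on isomorphism classes of modules. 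The items you flag as remaining obstacles --- precisely identifying the essential subspace at $\Lu$ and carrying out the $N_G(P,e_P)\times\Aut(L,u)$-orbit bookkeeping --- are exactly where the substantive technical work in \cite[Section~8]{BoucYilmaz2022} is done, so your submission is a correct outline rather than a complete proof, but the route is the right one.
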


Note that by Theorem~\ref{thm mult formula}, $\FF T^\Delta_{(G,b)}$ is isomorphic to a direct sum of only finitely many simple functors $S_{L,u,V}$. Moreover, if $S_{L,u,V}$ is isomorphic to a direct summand of $\FF T^\Delta_{(G,b)}$ then $L$ is isomorphic to a subgroup of a defect group of $(G,b)$. In addition to the notation introduced in \ref{noth prel}, we'll need the following.

\begin{notation}
Let $(G,b)$ be a group-block pair.

\smallskip
(a) For a $D^\Delta$-pair $(L,u)$ and an irreducible $\FF \Out(L,u)$-module, we denote by $m\big(S_{L,u,V},\FF T^\Delta_{(G,b)}\big)$ the multiplicity of $S_{L,u,V}$ as a direct summand of $\FF T^\Delta_{(G,b)}$.

\smallskip
(b) By $l(G,b)$, we denote the number of isomorphism classes of simple $kGb$-modules. By \cite[Corollary~8.23]{BoucYilmaz2022}, one has $l(G,b)= m\big(S_{1,1,\FF},\FF T^\Delta_{(G,b)}\big)$.

\smallskip
(c) Let $(D,e_D)$ be a maximal $(G,b)$-Brauer pair and $\calF$ the fusion system of $(G,b)$ with respect to $(D,e_D)$. Following \cite{AKO2011}, for any $P\le D$, we set $\Out_\calF(P):=\Aut_F(P)/\Inn(P)$ and $\Out_D(P) = \Aut_D(P)/\Inn(P)$. Note that $\Out_D(P)\le \Out_\calF(P)\le \Out(P)$.
\end{notation}


\section{More on multiplicities of simple functors in block functors}

The results in this section will be used to calculate the multiplicities of the simple functors in block functors in Sections~4--6.

\begin{remark}\label{rem simple functors}
We list the possible triples $(L,u,V)$ parametrizing simple diagonal $p$-permutation functors over $\FF$ for some $p$-groups $L$, cf.~\ref{noth prel}(a),(c).

\smallskip
(a) For $L=1$, the only possible triple is $(1,1,\FF)$.

\smallskip
(b) Let $L$ be a cyclic group of order $2^n$. Then $\Aut(L)$ is an abelian $2$-group. It follows that the possible triples $(L,u,V)$ are of the form $(L,1,V)$, where $V$ is an $\FF\Aut(L)$-module. Note that $V$ has $\FF$-dimension one.

\smallskip
(c) Let $L$ be a Klein-four group. Then $\Aut(L)=\Out(L)\cong S_3$. Up to isomorphism, the only $D^\Delta$-pairs with first entry $L$ are $(L,1)$ and $(L,u_0)$, where $u_0\in\Aut(L)$ is an element of order $3$. We have $\Out(L,1)\cong S_3$, $\Aut(L,u_0)\cong A_4$, and $\Out(L,u_0)\cong 1$. Hence, the only simple functors arising from $L=V_4$ are $S_{V_4,1,\FF}$, $S_{V_4,1,\FF_-}$, $S_{V_4,1,V_2}$, and $S_{V_4,u_0,\FF}$, where $\FF_-$ is the sign representation of $S_3$ and $V_2$ is an irreducible $\FF S_3$-module of dimension $2$.

\smallskip
(d) Let $L=Q_8$ be the quaternion group of order $8$. Then $\Aut(L)\cong S_4$ and $\Out(L)\cong S_3$. Let $u_0\in\Aut(L)$ be an element of order $3$. Up to isomorphism, the only $D^\Delta$-pairs with first entry $L$ are $(L,1)$ and $(L,u_0)$. We have $\Out(L,1)=\Out(L)\cong S_3$ and $\Out(L,u_0)=1$. Thus, the simple functors arising from $L=Q_8$ are $S_{Q_8,1,\FF}$, $S_{Q_8,1,\FF_-}$, $S_{Q_8, 1,V_2}$, and $S_{Q_8, u_0,\FF}$, where $\FF_-$ and $V_2$ are as in Part~(c).

\smallskip
(e) Let $L$ be a dihedral group of order $2^n\ge 8$ or generalized quaternion or semidihedral of order $2^n\ge 16$. Then $\Out(L)$ is an abelian $2$-group determined in more detail in Sections~4--6. Thus, the simple functors arising from $L$ are all of the form $S_{L,1,V}$, where $V$ is a one-dimensional $\FF\Out(L)$-module.
\end{remark}

\begin{remark}\label{rem iso}
For the following it is useful to be aware of the following fact. Let $f\colon G\to H$ be an isomorphism of groups. Then $f$ induces isomorphisms $\Aut(f)\colon\Aut(G)\to\Aut(H)$ and $\Out(f)\colon\Out(G)\to\Out(H)$ in the obvious way, which of course depend on $f$. In contrast, the isomorphisms between the module categories $\lMod{\Aut(H)}\to\lMod{\Aut(G)}$ and $\lMod{\Out(H)}\to \lMod{\Out(G)}$, induced by restriction along the isomorphisms $\Aut(f)$ and $\Out(f)$, respectively, do not depend on the choice of $f$, up to natural isomorphism.
\end{remark}

The following lemmas will be used in Sections~4--6.

\begin{lemma}(\cite[Lemma~2.4]{Yilmaz2024})\label{lem 1}
Let $(G,b)$ be a group-block pair, let $(D,e_D)$ be a maximal $(G,b)$-Brauer pair and let $\calF$ denote the fusion system of $(G,b)$ with respect to $(D,e_D)$. Then, for any simple $\FF \Out(D)$-module $V$, one has
\begin{equation*}
   m\big(S_{D,1,V},\FF T^\Delta_{(G,b)}\big) = \dim_\FF\bigl(V^{\Out_\calF(D)}\bigr)\,,
\end{equation*}
where $V^{\Out_\calF(D)}$ denotes the $\Out_\calF(D)$-fixed points of $V$.
\end{lemma}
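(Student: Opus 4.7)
The plan is to apply the multiplicity formula of Theorem~\ref{thm mult formula} with $(L,u)=(D,1)$ and to check that the double sum collapses to a single term whose dimension equals $\dim_\FF V^{\Out_\calF(D)}$.

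First I would reduce the outer sum over $P\in[\calF]$ to $P=D$: any element of $\calP_{(P,e_P)}(D,1)$ is a group isomorphism $D\to P$, which forces $|P|=|D|$ and hence $P=D$. For $P=D$ the condition $\pi\circ 1\circ\pi^{-1}\in\Aut_\calF(D)$ is vacuous, so $\calP_{(D,e_D)}(D,1)=\Aut(D)$. Since the right action of $\Aut(D,1)=\Aut(D)$ on $\Aut(D)$ is free and transitive, the inner sum reduces to a single $\bigl(N_G(D,e_D)\times\Aut(D)\bigr)$-orbit, which I would represent by $\pi=\id$; its stabilizer in $\Aut(D)$ is exactly $\Aut_\calF(D)$.

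Next I would invoke the classical fact that, for a maximal $(G,b)$-Brauer pair $(D,e_D)$, the block $e_D$ of $kC_G(D)$ is a nilpotent block with defect group $Z(D)$. The key observation is that every $e_D$-Brauer pair $(Q,f)$ inside $C_G(D)$ satisfies $Q\le Z(D)$: otherwise $(DQ,f)$ would be a $b$-Brauer pair in $G$ strictly larger than $(D,e_D)$. Since such $Q$ is central in $C_G(D)$, the inertial quotients vanish and $e_D$ is nilpotent. Nilpotent blocks have a unique isomorphism class of simple modules, hence $|\PIM(kC_G(D)e_D)|=1$ and $\FF\PIM(kC_G(D)e_D,1)\cong\FF$ as an $\FF$-vector space. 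With only one PIM, the induced permutation action of $\Aut_\calF(D)=N_G(D,e_D)/C_G(D)$ on this one-dimensional space is forced to be trivial.

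Assembling everything, the sole surviving summand in Theorem~\ref{thm mult formula} is $\FF\otimes_{\Aut_\calF(D)}V$, where $\FF$ carries the trivial right action. To translate this into $V^{\Out_\calF(D)}$, I would use that $V$ is an $\FF\Out(D)$-module, so $\Inn(D)\le\Aut_\calF(D)$ acts trivially on $V$ and the $\Aut_\calF(D)$-action factors through $\Out_\calF(D)$; then, since $\FF$ has characteristic zero and $\Aut_\calF(D)$ is finite, the averaging idempotent identifies coinvariants with invariants:
\begin{equation*}
    \FF\otimes_{\Aut_\calF(D)} V \;=\; V_{\Aut_\calF(D)} \;\cong\; V^{\Aut_\calF(D)} \;=\; V^{\Out_\calF(D)}.
\end{equation*}
Taking $\FF$-dimensions gives the lemma. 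The one nontrivial ingredient I foresee is the block-theoretic input that $e_D$ is nilpotent in $kC_G(D)$; everything else amounts to a careful bookkeeping of the actions in the multiplicity formula.
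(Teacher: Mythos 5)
Your proof is correct and follows essentially the same route as the paper's proof of the generalized statement in Lemma~\ref{lem 2}, specialized to $L=D$: apply Theorem~\ref{thm mult formula}, observe that only $P=D$ with a single orbit representative $\pi=\id$ survives, identify the stabilizer with $\Aut_\calF(D)$, and use semisimplicity of $\FF\Aut_\calF(D)$ to pass from coinvariants to invariants. The extra step you supply---that $l\big(kC_G(D)e_D\big)=1$ because $e_D$ is a nilpotent block of $kC_G(D)$ with central defect group $Z(D)$---is exactly the block-theoretic input the paper uses implicitly here (and explicitly in, e.g., the proof of Proposition~\ref{prop dihedral}(b)).
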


The following lemma is a generalization of Lemma~\ref{lem 1}

\begin{lemma}\label{lem 2}
Let $(G,b)$ be a group-block pair, let $(D,e_D)$ be a maximal $(G,b)$-Brauer pair and let $\calF$ denote the fusion system of $(G,b)$ with respect to $(D,e_D)$.
Further, suppose that $L$ is a $p$-group and that for every subgroup $P\le D$ with $P\cong L$ one has $l\big(kC_G(P)e_P\big)=1$. Then, for every simple $\FF\Out(L)$-module $V$, one has
\begin{equation*}
   m\big(S_{L,1,V}, \FF T^\Delta_{(G,b)}\big) = \sum_{L\cong P\in[\calF]} \dim_\FF\bigl(V^{\Out_\calF(P)}\bigr)\,,
\end{equation*}
where $V$ is viewed as an $\Out(P)$-module via any isomorphism $L\cong P$, cf.~Remark~\ref{rem iso}.
\end{lemma}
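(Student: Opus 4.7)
The plan is to specialize Theorem~\ref{thm mult formula} to the case $u=1$ and identify each ingredient. With $u=1$, the set $\calP_{(P,e_P)}(L,1)$ consists of all group isomorphisms $\pi\colon L\to P$ (the condition $\pi\circ 1\circ\pi^{-1}\in\Aut_\calF(P)$ is automatic), so it is nonempty if and only if $P\cong L$. Since $\Aut(L)$ acts freely and transitively on the right on $\Isom(L,P)$, the set $[\calP_{(P,e_P)}(L,1)]$ of $N_G(P,e_P)\times\Aut(L)$-orbits has exactly one element when $P\cong L$. Moreover, the hypothesis $l(kC_G(P)e_P)=1$ means that, for $P\cong L$, there is a unique isomorphism class of projective indecomposable $kC_G(P)e_P$-module, and it is fixed by the identity automorphism, so $\FF\PIM(kC_G(P)e_P,1)$ is one-dimensional. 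Since $N_G(P,e_P)$ permutes the PIMs and there is only one, the action of the stabilizer $\Aut(L,1)_{\overline{(P,e_P,\pi)}}$ on $\FF\PIM(kC_G(P)e_P,1)$ is trivial.

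Next I would identify the stabilizer. By definition,
\begin{equation*}
  \Aut(L,1)_{\overline{(P,e_P,\pi)}}=\{\varphi\in\Aut(L)\mid\pi\varphi\pi^{-1}\in\Aut_\calF(P)\}\,,
\end{equation*}
which under the isomorphism $\Aut(\pi)\colon\Aut(L)\to\Aut(P)$, $\varphi\mapsto\pi\varphi\pi^{-1}$, corresponds exactly to $\Aut_\calF(P)$. Combining this with the previous paragraph, the summand of the formula in Theorem~\ref{thm mult formula} associated with $P\cong L$ becomes $\FF\otimes_{\Aut_\calF(P)}V$, where $V$ is viewed as an $\Aut(P)$-module via $\Aut(\pi)$ and $\FF$ carries the trivial action.

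Finally, since $\FF$ has characteristic zero and $\Aut_\calF(P)$ is finite, coinvariants agree with invariants, so
\begin{equation*}
  \dim_\FF\bigl(\FF\otimes_{\Aut_\calF(P)}V\bigr)=\dim_\FF\bigl(V^{\Aut_\calF(P)}\bigr)\,.
\end{equation*}
Because $V$ is an $\Out(L)$-module, the normal subgroup $\Inn(P)$ of $\Aut(P)$ acts trivially on $V$, so $V^{\Aut_\calF(P)}=V^{\Out_\calF(P)}$. Summing over $P\in[\calF]$ with $P\cong L$ yields the claimed formula. The main thing to keep straight is the orbit count, namely that freeness of the right $\Aut(L)$-action on $\Isom(L,P)$ collapses $[\calP_{(P,e_P)}(L,1)]$ to a single orbit; the remaining steps are purely bookkeeping.
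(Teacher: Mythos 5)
Your proposal is correct and follows essentially the same route as the paper's proof: specialize Theorem~\ref{thm mult formula} to $u=1$, note that $\calP_{(P,e_P)}(L,1)$ is a single $\Aut(L)$-orbit, identify the stabilizer as $\pi^{-1}\Aut_\calF(P)\pi$, observe that $l(kC_G(P)e_P)=1$ makes the $\FF\PIM$ factor a trivial one-dimensional module, and pass from coinvariants to invariants in characteristic zero. The only cosmetic difference is that you spell out a few intermediate justifications (freeness of the right action, triviality of the stabilizer action on the one-dimensional $\FF\PIM$, and the reduction from $\Aut_\calF(P)$-invariants to $\Out_\calF(P)$-invariants) that the paper leaves implicit.
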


\begin{proof}
The proof is similar to the proof of Lemma~\ref{lem 1} given in \cite{BoucYilmaz2022}. We add the details for convenience. Let $P\le D$. Note that $\calP_{(P,e_P)}(L,1)$ is the set of all isomorphisms from $L$ to $P$. Suppose that $L\cong P$ and let $\pi\colon L\to P$ an isomorphism. Then $\Aut(L,1)$ acts transitively on $\calP_{(P,e_P)}(L,1)$ and we may choose $[\calP_{(P,e_P)}(L,1)]=\{\pi\}$. With this, we have
\begin{equation*}
   \Aut(L,1)_{\overline{(P,e_P,\pi)}} = \{\varphi\in\Aut(L)\mid \pi\varphi\pi^{-1}\in\Aut_\calF(P)\} = \pi^{-1}\Aut_\calF(P)\pi\,.
\end{equation*}
Since $l\big(kC_G(P)e_P \big)=1$, the formula in Theorem~\ref{thm mult formula} implies that
\begin{align*}
   m\big(S_{L,1,V},\FF T^\Delta_{(G,b)}\big) &= \sum_{L\cong P\in[\calF]} \dim_\FF\bigl(\FF\otimes_{\FF[\pi^{-1}\Aut_\calF(P)\pi]} V\bigr)\\
   & = \sum_{L\cong P\in[\calF]} \dim_\FF\bigl(V^{\pi^{-1}\Aut_\calF(P)\pi}\bigr)
   = \sum_{L\cong P\in[\calF]} \dim_\FF\bigl(V^{\Out_\calF(P)}\bigr)\,.
\end{align*}
\end{proof}

\begin{lemma}\label{lem 3}
Let $(G,b)$ be a group-block pair, let $(D,e_D)$ be a maximal $(G,b)$-Brauer pair and let $\calF$ denote the fusion system of $(G,b)$ with respect to $(D,e_D)$. Furthermore, let $L$ be a Klein-four group, let $(L,u)$ be a $D^\Delta$-pair and $V$ a simple $\FF\Out(L,u)$-module. Finally, suppose that $L\cong P\le D$. 

\smallskip
{\rm (a)} If $\Out_\calF(P)\cong C_2$ then (cf.~Remark~\ref{rem simple functors}(c))
\begin{equation*}
   \dim_\FF\Biggl(\bigoplus_{\pi\in[\calP_{(P,e_P})(L,u)]} \FF\otimes_{\Aut(L,u)_{\overline{(P,e_P,\pi)}}} V \Biggr) =
   \begin{cases}
      1\,, & \text{if $(u,V) = (1,\FF)$;}\\
      0\,, & \text{if $(u,V) = (1,\FF_-)$;}\\
      1\,, & \text{if $(u,V) = (1, V_2)$;}\\
      0\,, & \text{if $(u,V) = (u_0,\FF)$.}
   \end{cases}
\end{equation*}

\smallskip
{\rm (b)} If $\Out_{\calF}(P)\cong S_3$ then (cf.~Remark~\ref{rem simple functors}(c))
\begin{equation*}
   \dim_\FF\Biggl(\bigoplus_{\pi\in[\calP_{(P,e_P})(L,u)]} \FF\otimes_{\Aut(L,u)_{\overline{(P,e_P,\pi)}}} V \Biggr) =
   \begin{cases}
      1\,, & \text{if $(u,V) = (1,\FF)$;}\\
      0\,, & \text{if $(u,V) = (1,\FF_-)$;}\\
      0\,, & \text{if $(u,V) = (1, V_2)$;}\\
      1\,, & \text{if $(u,V) = (u_0,\FF)$.}
   \end{cases}
\end{equation*}   
\end{lemma}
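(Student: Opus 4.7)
The plan is to compute the stated dimension directly in each of the $8$ subcases, exploiting the fact that $L$ is Klein-four so that $\Aut(L) \cong S_3$. First I would fix an isomorphism $\pi_0 \colon L \to P$, which identifies $\Aut(L) \cong \Aut(P) \cong S_3$, and under this identification $\Aut_\calF(P)$ corresponds to a subgroup $H \le \Aut(L)$ which is $C_2$ in case (a) and all of $S_3$ in case (b). Since $\FF$ has characteristic zero and all stabilizers involved are finite, I would freely use the averaging identification $\FF \otimes_{\FF K} V \cong V^K$, which was also invoked in the proof of Lemma~\ref{lem 2}.

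For the four subcases with $u=1$, observe that $\Aut(L,1) = \Aut(L)$ acts simply transitively on the right on $\calP_{(P,e_P)}(L,1) = \{L \xrightarrow{\sim} P\}$, so the whole set is a single orbit and a single representative $\pi_0$ suffices. Its stabilizer, as in the proof of Lemma~\ref{lem 2}, is $\pi_0^{-1}\Aut_\calF(P)\pi_0$, i.e.\ the group $H$ under our identification. The computation then reduces to evaluating $\dim_\FF V^H$ for $V \in \{\FF, \FF_-, V_2\}$. The cases $V = \FF$ and $V = \FF_-$ are immediate from the definitions; for $V = V_2$ the character-table computation gives character value $0$ on any transposition, hence $V_2|_{C_2} = \FF \oplus \FF_-$ so $\dim V_2^{C_2} = 1$, while $V_2^{S_3} = 0$ since $V_2$ is a nontrivial irreducible. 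This yields the dimensions $(1,0,1)$ in case (a) and $(1,0,0)$ in case (b).

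For the two subcases with $u = u_0$ of order $3$, the membership condition $\pi u_0 \pi^{-1} \in \Aut_\calF(P)$ forces $\Aut_\calF(P)$ to contain an order-$3$ element. In case (a), $\Aut_\calF(P) \cong C_2$ has no such element, so $\calP_{(P,e_P)}(L,u_0) = \emptyset$ and the sum is $0$. In case (b), $\Aut_\calF(P) = \Aut(P)$ contains every element of $\Aut(P)$, so $\calP_{(P,e_P)}(L,u_0)$ is the full set of isomorphisms $L \to P$, on which $\Aut_\calF(P) = \Aut(P)$ already acts transitively from the left. Hence there is a single orbit represented by $\pi_0$, and its stabilizer in $\Aut(L,u_0)$ is all of $\Aut(L,u_0)$ since the condition $\pi_0\varphi\pi_0^{-1} \in \Aut(P)$ is automatic. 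Because $\Out(L,u_0) = 1$ by Remark~\ref{rem simple functors}(c), the $\Out(L,u_0)$-module $V = \FF$ is the trivial $\Aut(L,u_0)$-module, so $\FF \otimes_{\Aut(L,u_0)} \FF \cong \FF$ has dimension $1$.

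The only real obstacle is book-keeping: the action of $\Aut(L,u)$ on $\calP_{(P,e_P)}(L,u)$ is through its restriction to $\Aut(L)$ (which for $u = u_0$ sends $\Aut(L,u_0) \cong A_4$ onto the subgroup $C_3 \le \Aut(L) \cong S_3$), and one must verify that this restriction enters the stabilizer formula in a compatible way with the left action of $N_G(P,e_P)$ (which factors through $\Aut_\calF(P)$). The case (b), $(u_0,\FF)$ is the one where this matters and crucially uses $\Out(L,u_0) = 1$ to conclude that the full stabilizer acts trivially on $V$.
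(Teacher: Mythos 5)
Your proof is correct and essentially the same as the paper's: in both, one notes that for $u=1$ the set $\calP_{(P,e_P)}(L,1)$ is the full set of isomorphisms $L\to P$, a single $\Aut(L,1)$-orbit with stabilizer (identified via $\pi_0$) equal to $\Aut_\calF(P)$, reducing to computing $\dim_\FF V^{C_2}$ or $\dim_\FF V^{S_3}$; for $u=u_0$, the condition $\pi u_0\pi^{-1}\in\Aut_\calF(P)$ makes $\calP_{(P,e_P)}(L,u_0)$ empty in case (a) and all of $\Aut(L)$ in case (b), where $\Out(V_4,u_0)=1$ forces $V=\FF$ trivial and the coinvariants contribute dimension $1$. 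The extra remarks in your final paragraph about the restriction map $\Aut(L,u_0)\cong A_4\twoheadrightarrow C_3\le\Aut(L)$ and the compatibility with the $N_G(P,e_P)$-action, while accurate, are not needed beyond what the stabilizer formula already encodes.
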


\begin{proof}
We may assume that $P=L$ and make use of Remark~\ref{rem simple functors}(c).

\smallskip
(a) Suppose that $\Out_\calF(P)\cong C_2$. If $u=1$ then
\begin{equation*}
   \calP_{(P,e_P)}(L,u) = \Aut(L)\cong S_3\quad\text{and} \quad [\calP_{(P,e_P)}(L,u)]=\{\id\}\,.
\end{equation*}
This implies that
\begin{equation*}
   \Aut(L,1)_{\overline{(P,e_P,\id)}} = \{\varphi\in\Aut(L)\mid \varphi\in\Aut_\calF(P)\} \cong C_2\,,
\end{equation*}
and hence that
\begin{equation*}
   \dim_\FF\Biggl(\bigoplus_{\pi\in[\calP_{(P,e_P})(L,u)]} \FF\otimes_{\Aut(L,u)_{\overline{(P,e_P,\pi)}}} V \Biggr) =
   \dim_\FF (\FF\otimes _{\FF C_2} V) = \dim_\FF(V^{C_2})\,.
\end{equation*}
The latter is equal to $1$ if $V=\FF$ or $V\cong V_2$ and equal to $0$ otherwise. For $u=u_0$ of order~$3$, we have
\begin{equation*}
   \calP_{(P,e_P)}(L,u) = \{\varphi\in\Aut(L)\mid \varphi u_0 \varphi^{-1}\in\Aut_\calF(P)\} = \emptyset\,,
\end{equation*}
and the result follows.

\smallskip
(b) Suppose that $\Out_\calF(P)\cong S_3$ and let $u=1$. Then we have again
\begin{equation*}
   \calP_{(P,e_P)}(L,u) = \Aut(L)\cong S_3\quad\text{and}\quad [\calP_{(P,e_P)}(L,u)] = \{\id\},.
\end{equation*}
This implies that
\begin{equation*}
   \Aut(L,u)_{\overline{(P,e_P,\id)}} = \{\varphi\in\Aut(L)\mid \varphi\in\Aut_\calF(P)\} \cong S_3\,,
\end{equation*}
and hence that
\begin{equation*}
   \dim_\FF\Biggl(\bigoplus_{\pi\in[\calP_{(P,e_P})(L,u)]} \FF\otimes_{\Aut(L,u)_{\overline{(P,e_P,\pi)}}} V \Biggr) =
   \dim_\FF (\FF\otimes _{\FF S_3} V) = \dim_\FF(V^{S_3})\,.
\end{equation*}
The latter is nonzero only if $V=\FF$ and the result follows if $u=1$. If $u=u_0$ has order $3$ then 
\begin{equation*}
   \calP_{(P,e_P)}(L,u) = \{\varphi\in\Aut(L)\mid \varphi u_0 \varphi^{-1}\in\Aut_\calF(P)\} = \Aut(L)\cong S_3\,,
\end{equation*}
and we may choose $[\calP_{(P,e_P)}(L,u)]=\{\id\}$. Since $\Out(V_4,u_0)=1$, we obtain
\begin{equation*}
   \dim_\FF\Biggl(\bigoplus_{\pi\in[\calP_{(P,e_P})(L,u)]} \FF\otimes_{\Aut(L,u)_{\overline{(P,e_P,\pi)}}} V \Biggr) = \dim_\FF(\FF\otimes_\FF \FF) = 1
\end{equation*}
as desired.
\end{proof}

The proof of the following lemma is similar to the proof of Lemma~\ref{lem 2} and is left to the reader.

\begin{lemma}\label{lem 4}
Let $(G,b)$ be a group-block pair, let $(D,e_D)$ be a maximal $(G,b)$-Brauer pair and let $\calF$ denote the fusion system of $(G,b)$ with respect to $(D,e_D)$. Furthermore, let $L$ be a quaternion group of order $8$, let $(L,u)$ be a $D^\Delta$-pair and $V$ a simple $\FF\Out(L,u)$-module. Finally, suppose that $L\cong P\le D$. 

\smallskip
{\rm (a)} If $\Out_\calF(P)\cong C_2$ then (cf.~Remark~\ref{rem simple functors}(d))
\begin{equation*}
   \dim_\FF\Biggl(\bigoplus_{\pi\in[\calP_{(P,e_P})(L,u)]} \FF\otimes_{\Aut(L,u)_{\overline{(P,e_P,\pi)}}} V \Biggr) =
   \begin{cases}
      1\,, & \text{if $(u,V) = (1,\FF)$;}\\
      0\,, & \text{if $(u,V) = (1,\FF_-)$;}\\
      1\,, & \text{if $(u,V) = (1, V_2)$;}\\
      0\,, & \text{if $(u,V) = (u_0,\FF)$.}
   \end{cases}
\end{equation*}

\smallskip
{\rm (b)} If $\Out_{\calF}(P)\cong S_3$ then (cf.~Remark~\ref{rem simple functors}(c))
\begin{equation*}
   \dim_\FF\Biggl(\bigoplus_{\pi\in[\calP_{(P,e_P})(L,u)]} \FF\otimes_{\Aut(L,u)_{\overline{(P,e_P,\pi)}}} V \Biggr) =
   \begin{cases}
      1\,, & \text{if $(u,V) = (1,\FF)$;}\\
      0\,, & \text{if $(u,V) = (1,\FF_-)$;}\\
      0\,, & \text{if $(u,V) = (1, V_2)$;}\\
      1\,, & \text{if $(u,V) = (u_0,\FF)$.}
   \end{cases}
\end{equation*}   
\end{lemma}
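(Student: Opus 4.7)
The plan is to mirror the proof of Lemma~\ref{lem 3}. We may assume $P=L$ and use Remark~\ref{rem simple functors}(d), so the only pairs $(u,V)$ to handle are $(1,\FF)$, $(1,\FF_-)$, $(1,V_2)$, and $(u_0,\FF)$. The relevant structural facts are $\Aut(L)\cong S_4$ with $\Inn(L)\cong V_4$ and $\Out(L)\cong S_3$, and for $u_0$ of order three, $\Aut(L,u_0)\cong A_4$ with $\Out(L,u_0)=1$. We also use that the image of $N_G(P,e_P)$ in $\Aut(P)=\Aut(L)$ via $g\mapsto i_g|_P$ is precisely $\Aut_\calF(P)$. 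For each of the four pairs $(u,V)$ and each of the two hypotheses $\Out_\calF(P)\cong C_2$ or $S_3$, the plan is to make $\calP_{(P,e_P)}(L,u)$ explicit, choose biorbit representatives under the $\bigl(N_G(P,e_P),\Aut(L,u)\bigr)$-action, compute $\Aut(L,u)_{\overline{(P,e_P,\pi)}}$, and then evaluate the tensor product, using that the action on $V$ factors through $\Out(L,u)$ and that over $\FF$ of characteristic zero, coinvariants agree with invariants.

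For $u=1$, just as in Lemma~\ref{lem 3}, $\calP_{(P,e_P)}(L,1)=\Aut(L)$ and $\Aut(L,1)=\Aut(L)$ acts simply transitively on the right; thus $[\calP_{(P,e_P)}(L,1)]=\{\id\}$ with stabilizer $\Aut_\calF(P)$, and the tensor product has dimension $\dim_\FF V^{\Out_\calF(P)}$. Restricting the three simple $\FF S_3$-modules $\FF,\FF_-,V_2$ to a $C_2$ subgroup gives fixed-point dimensions $1,0,1$, and to all of $S_3$ gives $1,0,0$, matching the three $(1,V)$ entries of both (a) and (b). For $u=u_0$ in case~(a), $|\Aut_\calF(P)|=2\cdot|\Inn(P)|=8$ contains no element of order three, so $\calP_{(P,e_P)}(L,u_0)=\emptyset$ and the contribution is~$0$.

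The remaining subcase $u=u_0$ with $\Out_\calF(P)\cong S_3$ is where the only real work lies. Here $\Aut_\calF(P)=\Aut(P)$, hence $\calP_{(P,e_P)}(L,u_0)=\Aut(L)$; and since $V=\FF$ is forced by $\Out(L,u_0)=1$, the tensor product counts biorbits. The subtlety is that $\Aut(L,u_0)\cong A_4$ has index~$2$ in $\Aut(L)\cong S_4$, so the right action alone produces two orbits; however, the left $N_G(P,e_P)$-action factors through $\Aut_\calF(P)=\Aut(L)$ and is therefore transitive, fusing the two right cosets of $A_4$ into a single biorbit. Taking $\pi=\id$ with stabilizer all of $\Aut(L,u_0)$, the tensor product is $\FF$, contributing~$1$. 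This bicoset collapse is the only non-routine point; all remaining verifications mirror those already carried out in Lemma~\ref{lem 3}.
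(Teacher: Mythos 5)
Your proof is correct and reproduces, for $Q_8$ in place of $V_4$, exactly the argument the paper gives for Lemma~\ref{lem 3}; the paper explicitly declines to write out the proof of Lemma~\ref{lem 4} (``similar to the proof of Lemma~\ref{lem 2} and is left to the reader'' --- a reference that was surely meant to point to Lemma~\ref{lem 3}), so your proposal is precisely the omitted argument. Your one comment of added value --- that $\Aut(L,u_0)$, viewed inside $\Aut(L)\cong S_4$ via restriction, has index~$2$ so that the right action alone gives two orbits, fused by the transitive left action of $N_G(P,e_P)$ --- is a correct observation that the paper leaves tacit (the same phenomenon is already present, with index~$2$ for $C_3\le S_3$, in the $V_4$ case of Lemma~\ref{lem 3}).
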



\section{Blocks with dihedral defect groups}

Throughout this section we assume that $(G,b)$ is a group-block pair with defect group $D$ isomorphic to $D_{2^n}=\langle s,t\mid s^{2^{n-1}}=t^2=1,tst=s^{-1}\rangle$, the dihedral group of order $2^n$, with $n\ge 3$. First we recall a result that holds for dihedral groups of arbitrary order.

\begin{remark}\label{rem Aut dihedral}
Let $m\ge 3$ be an integer and let $D_{2m}=\langle s,t\mid s^m=t^2=1, tst=s^{-1}\rangle$ be a dihedral group of order $2m$. It is well-known and easy to see that the automorphisms of $D_{2m}$ are of the form $f_{a,b}$, where $f_{a,b}(s)=s^b$ and $f_{a,b}(t)=ts^a$, with integers $a,b$ such that $\gcd(m,b)=1$. It is easily verified that the map
\begin{equation*}
   \ZZ/m\ZZ\rtimes (\ZZ/m\ZZ)^\times \to \Aut(D_{2m})\,,\quad (\abar,\bbar)\mapsto f_{a,b}\,,
\end{equation*}
is an isomorphism, where the semidirect product is formed with respect to the action of $\bbar$ on $\abar$ by multiplication in the ring $\ZZ/m\ZZ$. 
Moreover, the inner automorphism given by conjugation with $t^ls^k$ is equal to $f_{-2k,(-1)^l}$, $k,l\in\ZZ$, so that under the above isomorphism, $\Inn(D_{2^n})$ corresponds to the subgroup $2(\ZZ/m\ZZ)\rtimes\{\pm 1+m\ZZ\}$. 
Thus, the above isomorphism induces an isomorphism
\begin{equation*}
   \Out(D_{2m})\cong 
   \begin{cases}
      \ZZ/2\ZZ\times \big((\ZZ/m\ZZ)^\times/\{\pm 1+m\ZZ\}\big)\,, & \text{if $m$ is even;}\\
      (\ZZ/m\ZZ)^\times /\{\pm 1+m\ZZ\}\,, & \text{ if $m$ is odd.}
   \end{cases}
\end{equation*}
In particular, $\Out(D_{2m})$ is abelian and 
\begin{equation*}
   \Out(D_{2^n}) \cong \ZZ/2\ZZ\times \big((\ZZ/2^{n-1}\ZZ)^\times/\{\pm 1+2^{n-1}\ZZ\}\big)
\end{equation*}
is an abelian $2$-group.
\end{remark}

\begin{nothing}\label{noth dihedral}
(a) Following \cite[Section~1]{Brauer1974}, we define the following subgroups of $D_{2^n}$:
\begin{equation*}
   S_m:=\langle s_m\rangle \quad\text{with $s_m:=s^{2^{n-m-1}}$,}
\end{equation*}
a cyclic subgroup of order $2^m$, for $0\le m\le n-1$, and
\begin{equation*}
   W^1_m:=\langle s_{m-1},t\rangle\quad\text{and}\quad W^2_m:=\langle s_{m-1}, st\rangle\quad \text{for $1\le m\le n-1$.}
\end{equation*}
Then the subgroups $D_{2^n}$, $S_m$ ($0\le m\le n-1$) and $W^i_m$ ($i\in\{1,2\}$ and $1\le m\le n-1$), form a complete set of representatives of the conjugacy classes of subgroups of $D_{2^n}$. Moreover, $W^i_m$ (for $i=1,2$ and $3\le m\le n-1$) is isomorphic to a dihedral group of order $2^m$ and $W^i_2$, for $i=1,2$ are isomorphic to Klein-four groups.  For $i=1,2$ and $2\le m\le n-2$, one has $N_{D_{2^n}}(W_m^i)=W_{m+1}^i$.

\smallskip
(b) Up to isomorphism, there exist three saturated fusion systems on $D$ (see \cite[Example~3.8]{AKO2011} and \cite[Theorem~5.3]{CravenGlesser2012}). Following \cite{AKO2011} we denote by $\calF_{00}$ (resp.~$\calF_{01}$, $\calF_{11}$) the fusion system with three (resp.~two, one) isomorphism classes of involutions. They can be realized, respectively, as the fusion systems of the groups $D$, of the principal blocks of $\PGL(2,q)$ for any odd prime power $q$ such that $2(q+1)_2=2^n$ or $2(q-1)_2=2^n$, and of the principal block of $\PSL(2,q)$ for any odd prime power $q$ such that $(q+1)_2=2^n$ or $(q-1)_2=2^n$. One has
\begin{equation*}
   \Aut_{\calF_{00}}(W_2^i)\cong C_2\,,\ \Aut_{\calF_{01}}(W_2^1)\cong C_2\,,\ \Aut_{\calF_{01}}(W_2^2)\cong S_3\,,\text{ and } 
   \Aut_{\calF_{11}}(W_2^i)\cong S_3\,,
\end{equation*}
for $i=1,2$. For each of the three fusion systems, the essential subgroups of $D$ are precisely the subgroups isomorphic to Klein-four groups.
\end{nothing}

The following theorem generalizes the result in \cite{Yilmaz2024} from $n=3$ to $n\ge 3$. 

\begin{proposition}\label{prop dihedral}
Let $(G,b)$ be a group-block pair, let $(D,e_D)$ be a maximal $(G,b)$-Brauer pair and let $\calF$ denote the fusion system of $(G,b)$ with respect to $(D,e_D)$.
Suppose that the defect group $D$ of $(G,b)$ is isomorphic to $D_{2^n}$, the dihedral group of order $2^n$.

\smallskip
{\rm (a)} Let $L=D_{2^n}$. Then, for any simple $\FF\Out(L)$-module $V$, one has $m\big(S_{L,1,V},\FF T^\Delta_{(G,b)}\big) = 1$.

\smallskip
{\rm (b)} Let $L=D_{2^m}$ with $3\le m\le n-1$ and let $\epsilon\in\Out(L)$ be the class of the automorphism of $L$ that is the identity on the cyclic subgroup of order $2^{m-1}$ and switches the two conjugacy classes of non-central involutions of $L$. Then, for any simple $\FF\Out(L)$-module $V$, one has
\begin{equation*}
   m\big(S_{L,1,V},\FF T^\Delta_{(G,b)}\big) = 
   \begin{cases}
      2\,, & \text{if $\epsilon\in\ker(V)$;}\\
      0\,, & \text{if $\epsilon\notin\ker(V)$.}
   \end{cases}
\end{equation*}

\smallskip
{\rm (c)} Let $L=C_{2^m}$ be a cyclic group of order $2^m$ with $2\le m\le n-1$, and let $V$ be a simple $\FF\Out(L)$-module. Moreover, let $\epsilon\in\Out(L)$ be the class of the inversion automorphism $x\mapsto x^{-1}$ of $L$. Then
\begin{equation*}
   m\big(S_{L,1,V},\FF T^\Delta_{(G,b)}\big) =
   \begin{cases}
      1\,, & \text{if $\epsilon\in\ker(V)$;}\\
      0\,, & \text{if $\epsilon\notin\ker(V)$.}
   \end{cases}
\end{equation*}

\smallskip
{\rm (d)} Let $L=V_4$ and let $a_1$, $a_2$, $a_3$, $a_4$ denote the multiplicities of $S_{V_4, 1,\FF}$, $S_{V_4,1,\FF_-}$, $S_{V_4,1,V_2}$, $S_{V_4,u_0,\FF}$ in $\FF T^\Delta_{(G,b)}$, respectively (see Remark~\ref{rem simple functors}(c)). Then
\begin{equation*}
   (a_1,a_2,a_3,a_4)=
   \begin{cases}
      (2,0,2,0)\,, & \text{if $\calF\cong \calF_{00}$;}\\
      (2,0,1,1)\,, & \text{if $\calF\cong \calF_{01}$;}\\
      (2,0,0,2)\,, & \text{if $\calF\cong \calF_{11}$.}\\
   \end{cases}
\end{equation*}

\smallskip
{\rm (e)} Let $L=C_2$. Then
\begin{equation*}
   m\big(S_{C_2,1,\FF}, \FF T^\Delta_{(G,b)}\big)=
   \begin{cases}
      3\,, & \text{if $\calF\cong \calF_{00}$;}\\
      2\,, & \text{if $\calF\cong \calF_{01}$;}\\
      1\,, & \text{if $\calF\cong \calF_{11}$.}\\
   \end{cases}
\end{equation*}
   
\smallskip
{\rm (f)} Let $L=\{1\}$. Then
\begin{equation*}
   m\big(S_{\{1\},1,\FF},\FF T^\Delta_{(G,b)}\big)=
   \begin{cases}
      1\,, & \text{if $\calF\cong \calF_{00}$;}\\
      2\,, & \text{if $\calF\cong \calF_{01}$;}\\
      3\,, & \text{if $\calF\cong \calF_{11}$.}\\
   \end{cases}
\end{equation*}
\end{proposition}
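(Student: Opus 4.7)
The plan is to apply Theorem~\ref{thm mult formula} via Lemmas~\ref{lem 1}, \ref{lem 2}, and~\ref{lem 3}. For each of the six subgroup types $L$ appearing in the proposition, three pieces of local data will be needed: the $\calF$-isomorphism classes of subgroups of $D$ isomorphic to $L$, the groups $\Out_\calF(P)$ on representatives, and the numbers $l(kC_G(P)e_P)$ of simple modules in the centralizer blocks.

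First I would combine the subgroup classification in~\ref{noth dihedral}(a) with Alperin's fusion theorem, applied to the fact that the essentials of $\calF$ are precisely the Klein-four subgroups (\ref{noth dihedral}(b)), to pin down the $\calF$-isomorphism classes and outer automorphism groups. The key observation is that $\Out_\calF(D)$ is a $p'$-subgroup of the $2$-group $\Out(D)$ (Remark~\ref{rem Aut dihedral}), hence trivial; this forces subgroups of order larger than $4$ to have $\calF$-class equal to $D$-class, and identifies $\Out_\calF(P)$ with $\Out_D(P)$ in those cases. The automorphism $\epsilon$ of~(b) is then conjugation by $s_m$ (identity on $\langle s_{m-1}\rangle$, sending $t$ to $s_{m-1}t$), and the $\epsilon$ of~(c) is inversion induced by $t$. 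The values of $\Out_\calF(W_2^i)$ for part~(d) and the number of $\calF$-classes of involutions for part~(e) are read directly from~\ref{noth dihedral}(b).

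The crucial intermediate step will be to establish $l(kC_G(P)e_P)=1$ for every non-trivial $P\le D$. When $C_D(P)$ is cyclic (cases $P\cong D_{2^m}$, $m\ge 3$, and $P=S_m$), the inertial quotient of $e_P$ sits inside the $2$-group $\Aut(C_D(P))$ while being $p'$, hence is trivial and $e_P$ is nilpotent. For $P\cong V_4$ and for $P$ a non-central involution, the same parity obstruction applied to the centralizer fusion $C_\calF(P)$ on $C_D(P)\le V_4$ again forces the inertial quotient to be trivial. Finally, for $P=\langle z\rangle$ I would identify the type of $C_\calF(z)$ as $\calF_{00}$ (because in each of the three global fusion systems the stabilizer of $z$ in $\Aut_\calF(W_2^i)$ is $C_2$), and invoke Erdmann's classification of blocks with dihedral defect to conclude $l(e_z)=1$. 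With these data in hand, parts~(a)--(d) follow by direct substitution into Lemmas~\ref{lem 1}, \ref{lem 2}, \ref{lem 3}; part~(e) reduces via $\Aut(C_2)=1$ to counting the $\calF$-classes of involutions, namely $3,2,1$; and part~(f) combines $m(S_{1,1,\FF},\FF T^\Delta_{(G,b)})=l(G,b)$ from \cite[Corollary~8.23]{BoucYilmaz2022} with Erdmann's $l(G,b)\in\{1,2,3\}$.

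The hardest part will be the nilpotency step, especially $P=\langle z\rangle$, where no cyclic-defect shortcut is available and Erdmann's global classification must be brought in. A secondary annoyance will be the table in Part~(d), where twelve entries across three fusion systems, each computed via Lemma~\ref{lem 3}, must match the stated tuples $(a_1,a_2,a_3,a_4)$.
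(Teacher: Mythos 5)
Your route is essentially the paper's: Theorem~\ref{thm mult formula} filtered through Lemmas~\ref{lem 1}, \ref{lem 2}, \ref{lem 3}, with Alperin's fusion theorem and the fact that essentials are Klein-four to control $\Out_\calF(P)$, and the reduction to showing $l(kC_G(P)e_P)=1$ for all non-trivial $P\le D$. The one place you diverge is how you establish $l(kC_G(P)e_P)=1$ for $|P|=2$ and for part~(f): you argue via the centralizer fusion system $C_\calF(P)$ (showing it is $\calF_{00}$ for the central involution, and nilpotent for a non-central one) plus Erdmann's classification, whereas the paper cites Brauer's 1974 article directly (Proposition~4F and the paragraph before 4B for part~(e), and Theorem~2 for part~(f)). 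Both work; Brauer's reference is more economical, your route is more structural. Two small cautions: (i) your phrase ``the same parity obstruction'' for $C_D(P)\cong V_4$ is misleading, since $\Aut(V_4)\cong S_3$ is not a $2$-group; what actually saves the argument for a non-central involution $t$ is that $\Aut_{C_\calF(t)}(C_D(t))$ lands in the point-stabilizer of $t$ in $S_3$, which is $C_2$, and then the $2'$-condition from the Sylow axiom forces triviality. You should spell that out rather than call it a parity obstruction. (ii) For $|P|\cong V_4$ and $|P|\cong D_{2^m}$ the paper uses the cleaner observation that the defect group $C_D(P)$ is central in $C_G(P)$, which gives $l=1$ immediately without any nilpotency argument; your inertial-quotient route works too, but note that for $D_{2^m}$ the relevant defect group is $Z(D)\cong C_2$, not a larger cyclic group.
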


\begin{proof}
(a) Since $\Aut(D)$ is a $2$-group (see Remark~\ref{rem Aut dihedral}), the Sylow axiom for saturated fusion systems implies that $\Out_\calF(D)= 1$. Moreover, $V$ is $1$-dimensional, since $\Out(D)$ is an abelian group by Remark~\ref{rem Aut dihedral}. The result follows now from Lemma~\ref{lem 1}.

\smallskip
(b) We will use Lemma~\ref{lem 2}. We first note that there are precisely two $\calF$-isomorphism classes of subgroups of $D$ isomorphic to $D_{2^m}$. In fact there are two conjugacy classes of subgroups of $D$ isomorphic to $D_{2^m}$. 
Moreover, they are not fused in $\calF$, since the Klein-four groups are the only $\calF$-essential subgroups of $D$, and if they were fused they would have to be conjugate by an element in $D$, by Alperin's fusion theorem. Now let $D_{2^m}\cong P\le D$. 
Again, by Alperin's fusion theorem, $\Aut_{\calF}(P)$ is given by conjugations with elements from $N_D(P)$. But $N_D(P)$ is isomorphic to a dihedral subgroup of $D$ of order $2^{m+1}$. Thus, $\Out_{\calF}(P)\cong C_2$. The non-trivial element of $\Out_{\calF}(P)$ is the class of the conjugation automorphism with any element of $N_D(P)$ which is not in $P$. We may choose this element to be an element of order $2^m$. 
Note that this element centralizes the cyclic subgroup of order $2^{m-1}$ of $P$. Therefore, under any isomorphism $P\cong L$ the corresponding element of $L$ will again centralize the cyclic subgroup of $L$ of order $2^{m-1}$. It follows that the class of the conjugation with this element is equal to $\epsilon$ as described in the result. 
Finally, by choosing $P$ to be fully $\calF$-centralized, we can see that $C_D(P)=Z(D)$ is a defect group of the group-block pair $(C_G(P), e_P)$. Since $Z(D)=Z(P)$ is central in $C_G(P)$, we obtain $l\big(kC_G(P)e_P\big)=1$. The result now follows from Lemma~\ref{lem 2}, and the fact that $V$ is one-dimensional, since $\Out(L)$ is abelian.

\smallskip
(c) We will use again Lemma~\ref{lem 2}. First note that there is a unique subgroup $P$ of $D$ isomorphic to $L$. Therefore $P$ is fully $\calF$-centralized and $C_D(P)$ is a defect group of $(C_G(P), e_P)$. But $C_D(P)$ is the cyclic subgroup of order $2^{n-1}$ of $D$. Since it is cyclic and $p=2$, the block $kC_G(P)e_P$ is nilpotent and we obtain $l\big(kC_G(P)e_P\big)=1$. Moreover, by Alperin's fusion theorem (the only $\calF$-essential subgroups of $D$ are isomorphic to Klein-four groups), we obtain $\Out_\calF(P)\cong N_D(P)/PC_D(P)\cong C_2$ and that the non-trivial element of $\Out_\calF(P)$ is given by the inversion automorphism of $P$. The result now follows from Lemma~\ref{lem 2}.

\smallskip
(d) We will use Lemma~\ref{lem 3}. By Alperin's fusion theorem, the two conjugacy classes of subgroups of $D$ isomorphic to $V_4$ are also the $\calF$-isomorphism classes. We may choose $V_4\cong P\le D$ to be fully $\calF$-centralized. Then $C_D(P)=P$ is a defect group of $(C_G(P), e_P)$ and it is central in $C_G(P)$. Thus, $l\big(kC_G(P)e_P\big)=1$.  The result now follows from  Theorem~\ref{thm mult formula} and Lemma~\ref{lem 3}.

\smallskip
(e) For any subgroup $P$ of $D$ of order $2$, one has $l\big(kC_G(P)e_P\big)=1$ by the last paragraph before Proposition~4B and by Proposition~4F of \cite{Brauer1974}. Lemma~\ref{lem 2} implies that $m\big(S_{C_2,1,\FF},\FF T^\Delta_{(G,b)}\big)$ is equal to the number of $\calF$-isomorphism classes of subgroups of $D$ of order $2$. The result follows.

\smallskip
(f) Since $m\big(S_{1,1,\FF},\FF T^\Delta_{(G,b)}\big) = l(kGb)$, see \cite[Corollary~8.23]{BoucYilmaz2022}, the result follows immediately from \cite[Theorem~2]{Brauer1974}.
\end{proof}

If a simple functor $S_{L,u,V}$ occurs as a direct summand of $\FF T^\Delta_{(G,b)}$ then $L$ is isomorphic to a subgroup of the defect group $D$ of $(G, b)$. Therefore, Proposition~\ref{prop dihedral} covers all possible simple functors. Since the multiplicities computed in Proposition~\ref{prop dihedral} only depend on the fusion system of $(G,b)$ and since non-isomorphic fusion systems produce different multiplicities of $S_{1,1,\FF}$, this proves the part of Theorem~\ref{thm main} concerning dihedral defect groups.


\section{Blocks with generalized quaternion defect groups}

Throughout this section we assume that $(G,b)$ is a group-block pair with defect group $D$ isomorphic to $Q_{2^n}=\langle s,t\mid s^{2^{n-1}}=t^4=1,t^{-1}st=s^{-1}, s^{2^{n-2}}=t^2\rangle$, the generalized quaternion group of order $2^n$, with $n\ge 4$. The case $n=3$ has been already established in~\cite{Yilmaz2024}.

\begin{remark}\label{rem Aut quaternion}
It is well-known and an easy verification that every automorphism of $Q_{2^n}$ (for $n\ge 4$) is of the form $f_{a,b}$, where $f_{a,b}(s)=s^b$ and $f_{a,b}(t)= ts^a$ for some $a,b\in\ZZ$ with $\gcd(2,b)=1$. This induces a group isomorphism
\begin{equation*}
   \ZZ/2^{n-1}\ZZ \rtimes (\ZZ/2^{n-1}\ZZ)^\times \to \Aut(Q_{2^n})\,,\quad (\abar,\bbar)\mapsto f_{a,b}\,,
\end{equation*}
where in the semidirect product the multiplicative group $(\ZZ/2^{n-1}\ZZ)^\times$ acts by multiplication on the additive group $\ZZ/2^{n-1}\ZZ$.
It is easy to see that conjugation with an element $t^ls^k$, for integers $k,l$, is equal to $f_{-2k, (-1)^l}$. This implies that under the above isomorphism $\Inn(Q_{2^n})$ corresponds to the subgroup $2\ZZ/2^{n-1}\ZZ \rtimes \{\pm 1+2^{n-1}\ZZ\}$ and that the above isomorphism induces an isomorphism
\begin{equation*}
  \Out(Q_{2^n})\cong  \ZZ/2\ZZ\times \big((\ZZ/2^{n-1}\ZZ)^\times/\{\pm 1+2^{n-1}\ZZ\}\big)\,.
\end{equation*}
In particular, $\Aut(Q_{2^n})$ is a $2$-group and $\Out(Q_{2^n})$ is an abelian $2$-group.
\end{remark}

\begin{nothing}\label{noth quaternion}
(a) Similar to \ref{noth dihedral}, we define the following subgroups of $Q_{2^n}$:
\begin{equation*}
   S_m:=\langle s_m\rangle \quad\text{with $s_m:=s^{2^{n-m-1}}$,}
\end{equation*}
a cyclic subgroup of order $2^m$ for $0\le m\le n-1$, and
\begin{equation*}
   T^1_m:=\langle s_{m-1},t\rangle\quad\text{and}\quad T^2_m:=\langle s_{m-1}, st\rangle\quad \text{for $2\le m\le n-1$.}
\end{equation*}
Then the subgroups $Q_{2^n}$, $S_m$ ($0\le m\le n-1$) and $T^i_m$ ($i\in\{1,2\}$ and $2\le m\le n-1$), form a complete set of representatives of the conjugacy classes of subgroups of $Q_{2^n}$. Moreover, $T^i_m$ (for $i=1,2$ and $4\le m\le n-1$) is isomorphic to a generalized quaternion group of order $2^{m}$ and $T^i_3$, for $i=1,2$ are isomorphic to quaternion groups of order $8$. For $i=1,2$ and $2\le m\le n-2$, one has $N_{Q_{2^n}}(T_m^i)=T_{m+1}^i$.

\smallskip
(b) Up to isomorphism, there exist three saturated fusion systems on $D$ (see \cite[Example~3.8]{AKO2011} and \cite[Theorem~5.3]{CravenGlesser2012}). Following again \cite{AKO2011} we denote by $\calF_{00}$ (resp.~$\calF_{01}$, $\calF_{11}$) the fusion system with three (resp.~two, one) isomorphism classes of cyclic groups of order $4$. They can be realized, respectively, as the fusion systems of the groups $D$, of the principal blocks of $2.\PGL(2,q)$ for any odd prime power $q$ such that $2(q+1)_2=2^{n-1}$ or $2(q-1)_2=2^n$, and of the principal block of $\SL(2,q)$ for any odd prime power $q$ such that $(q+1)_2=2^{n-1}$ or $(q-1)_2=2^{n-1}$. One has
\begin{equation*}
   \Aut_{\calF_{00}}(T_3^i)\cong C_2\,,\ \Aut_{\calF_{01}}(T_3^1)\cong C_2\,,\ \Aut_{\calF_{01}}(T_3^2)\cong S_3\,,\text{ and } 
   \Aut_{\calF_{11}}(T_2^i)\cong S_3\,,
\end{equation*}
for $i=1,2$. For each of the three fusion systems, the essential subgroups of $D$ are precisely the subgroups isomorphic to quaternion groups of order $8$.
\end{nothing}

\begin{proposition}\label{prop quaternion}
Let $(G,b)$ be a group-block pair, let $(D,e_D)$ a maximal $(G,b)$-Brauer pair and let $\calF$ denote the fusion system of $(G,b)$ with respect to $(D,e_D)$.
Suppose that the defect group $D$ of $(G,b)$ is isomorphic to $Q_{2^n}$, the generalized quaternion group of order~$2^n$ with $n\ge 4$.

\smallskip
{\rm (a)} Let $L=Q_{2^n}$. Then, for any simple $\FF\Out(L)$-module $V$, one has $m\big(S_{L,1,V},\FF T^\Delta_{(G,b)}\big) = 1$.

\smallskip
{\rm (b)} Let $L=Q_{2^m}$ with $4\le m\le n-1$ and let $\epsilon\in\Out(L)$ be the class of the automorphism of $L$ that is the identity on the cyclic subgroup of order $2^{m-1}$ and switches the two conjugacy classes of subgroups of order $4$  of $L$ not contained in the cyclic subgroup of order $2^{m-1}$. Then, for any simple $\FF\Out(L)$-module $V$, one has
\begin{equation*}
   m\big(S_{L,1,V},\FF T^\Delta_{(G,b)}\big) = 
   \begin{cases}
      2\,, & \text{if $\epsilon\in\ker(V)$;}\\
      0\,, & \text{if $\epsilon\notin\ker(V)$.}
   \end{cases}
\end{equation*}

\smallskip
{\rm (c)} Let $L=C_{2^m}$ be a cyclic group of order $2^m$ with $3\le m\le n-1$, and let $V$ be a simple $\FF\Out(L)$-module. Moreover, let $\epsilon\in\Out(L)$ be the class of the inversion automorphism $x\mapsto x^{-1}$ of $L$. Then
\begin{equation*}
   m\big(S_{L,1,V},\FF T^\Delta_{(G,b)}\big) =
   \begin{cases}
      1\,, & \text{if $\epsilon\in\ker(V)$;}\\
      0\,, & \text{if $\epsilon\notin\ker(V)$.}
   \end{cases}
\end{equation*}

\smallskip
{\rm (d)} Let $L=Q_8$ and let $a_1$, $a_2$, $a_3$, $a_4$ denote the multiplicities of $S_{Q_8, 1,\FF}$, $S_{Q_8,1,\FF_-}$, $S_{Q_8,1,V_2}$, $S_{Q_8,u_0,\FF}$ in $\FF T^\Delta_{(G,b)}$, respectively (see Remark~\ref{rem simple functors}(d)). Then
\begin{equation*}
   (a_1,a_2,a_3,a_4)=
   \begin{cases}
      (2,0,2,0)\,, & \text{if $\calF\cong \calF_{00}$;}\\
      (2,0,1,1)\,, & \text{if $\calF\cong \calF_{01}$;}\\
      (2,0,0,2)\,, & \text{if $\calF\cong \calF_{11}$.}\\
   \end{cases}
\end{equation*}

\smallskip
{\rm (e)} Let $L=C_4$. Then
\begin{equation*}
m\big(S_{C_4,1,\FF_-}, \FF T^\Delta_{(G,b)}\big)=0\quad \text{and}\quad  m\big(S_{C_4,1,\FF}, \FF T^\Delta_{(G,b)}\big)=
   \begin{cases}
      3\,, & \text{if $\calF\cong \calF_{00}$;}\\
      2\,, & \text{if $\calF\cong \calF_{01}$;}\\
      1\,, & \text{if $\calF\cong \calF_{11}$.}\\
   \end{cases}
\end{equation*}

\smallskip
{\rm (f)} One has
\begin{equation*}
   m\big(S_{1,1,\FF},\FF T^\Delta_{(G,b)}\big)= m\big(S_{C_2,1,\FF},\FF T^\Delta_{(G,b)}\big)=
   \begin{cases}
      1\,, & \text{if $\calF\cong \calF_{00}$;}\\
      2\,, & \text{if $\calF\cong \calF_{01}$;}\\
      3\,, & \text{if $\calF\cong \calF_{11}$.}\\
   \end{cases}
\end{equation*}
\end{proposition}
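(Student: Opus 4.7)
The plan is to follow the strategy of the proof of Proposition~\ref{prop dihedral}, adapted to the generalized quaternion structure. The key observation is that, by \ref{noth quaternion}(b) and Alperin's fusion theorem, all non-inner fusion in $\calF$ is controlled by $\Aut_\calF(D)$ together with the $\Aut_\calF(T_3^i)$ for $i=1,2$, since the essential subgroups of $D$ are precisely the $T_3^i\cong Q_8$. Throughout, I will apply Theorem~\ref{thm mult formula} together with Lemmas~\ref{lem 1}, \ref{lem 2}, and \ref{lem 4}, after identifying the $\calF$-isomorphism classes of subgroups of $D$ of each isomorphism type, computing the outer fusion groups $\Out_\calF(P)$, and verifying that $l(kC_G(P)e_P)=1$.

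Part (a) is immediate from Lemma~\ref{lem 1}: Remark~\ref{rem Aut quaternion} shows $\Aut(D)$ is a $2$-group, so the Sylow axiom forces $\Out_\calF(D)=1$, and since $\Out(D)$ is abelian $V$ is one-dimensional. For part (b), the two $D$-conjugacy classes $T_m^1,T_m^2$ of $Q_{2^m}$-subgroups ($4\le m\le n-1$) stay distinct in $\calF$, as no essential subgroup is large enough to fuse them; for each such $P$, $N_D(P)=T_{m+1}^i$ together with $C_D(P)=Z(P)=Z(D)$ gives $\Out_D(P)\cong C_2$, and since no essential strictly contains $P$, Alperin's theorem yields $\Out_\calF(P)=\Out_D(P)$, generated precisely by the $\epsilon$ of the statement (a representative of $N_D(P)/P$ fixes the cyclic subgroup of index $2$ pointwise and swaps the two conjugacy classes of non-cyclic order-$4$ subgroups of $P$). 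Since $Z(P)=Z(D)$ is central in $C_G(P)$, one gets $l(kC_G(P)e_P)=1$, and Lemma~\ref{lem 2} delivers (b). Part (c) is analogous: for $3\le m\le n-1$, the cyclic subgroup of order $2^m$ is unique in $D$ (lying in $\langle s\rangle$), its centralizer is $\langle s\rangle$ giving a nilpotent block with $l=1$, and $\Out_\calF(P)\cong C_2$ is generated by the inversion induced by conjugation by $t$.

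Part (d) requires verifying that the two $D$-conjugacy classes $T_3^1,T_3^2$ of $Q_8$-subgroups remain distinct in all three fusion systems: since the $T_3^i$ are themselves the essentials, the $\Aut_\calF$-actions permute subgroups of each $T_3^i$ internally but cannot produce an $\calF$-isomorphism between $T_3^1$ and $T_3^2$. With $C_D(T_3^i)=Z(T_3^i)$ giving $l=1$, applying Theorem~\ref{thm mult formula} and Lemma~\ref{lem 4} to each of the two classes (with $\Out_\calF(T_3^i)$ as in \ref{noth quaternion}(b)) produces the tabulated $(a_1,a_2,a_3,a_4)$ in all three cases. For part (e), the $C_4$-subgroups of $D$ fall into three $D$-conjugacy classes: $Z_4=\langle s^{2^{n-3}}\rangle$, $T_2^1=\langle t\rangle$, and $T_2^2=\langle st\rangle$; in each case the centralizer in $D$ is cyclic or equals $P$, giving $l=1$. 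A direct computation shows that conjugation by $s^{2^{n-3}}$ inverts every $s^j t$ and that $t$ inverts the generator of $Z_4$, so the inversion automorphism always lies in $\Aut_D(P)\subseteq\Aut_\calF(P)$, forcing the $\FF_-$-multiplicity to vanish. In $\calF_{00}$ the three $D$-classes are the $\calF$-classes; in $\calF_{01}$ the $S_3$-action on $T_3^2$ merges $Z_4$ with the $T_2^2$-class, leaving two; in $\calF_{11}$ the additional $S_3$-action on $T_3^1$ further fuses in the $T_2^1$-class, leaving one. Lemma~\ref{lem 2} then gives the values $3,2,1$.

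For part (f), the equality $m(S_{1,1,\FF},\FF T^\Delta_{(G,b)})=l(G,b)$ is \cite[Corollary~8.23]{BoucYilmaz2022}, and the values $l(G,b)\in\{1,2,3\}$ according to the fusion system are classical results of Olsson on blocks with generalized quaternion defect groups (see also Erdmann's monograph on tame blocks). Since $D$ has a unique involution $z\in Z(D)$, the $\calF$-isomorphism class of $C_2$-subgroups consists of the single representative $\langle z\rangle$, and Theorem~\ref{thm mult formula} reduces the computation to $m(S_{C_2,1,\FF},\FF T^\Delta_{(G,b)})=l(kC_G(z)e_z)$. The remaining equality $l(kC_G(z)e_z)=l(G,b)$ is the main obstacle: it is not formal, since $l$ is not in general a fusion-system invariant, but it follows from the observation that $(C_G(z),e_z)$ has defect group $C_D(z)=D$ and centralizer fusion system $C_\calF(\langle z\rangle)=\calF$ (as $z$ is the unique involution of $D$, any $\calF$-automorphism automatically fixes $z$), combined with the tame-block classification showing that $l$ depends only on the fusion system for blocks of generalized quaternion type.
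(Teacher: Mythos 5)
Your proposal is correct and follows essentially the same route as the paper's proof: for each isomorphism type of $L$ you identify $\calF$-isomorphism classes of subgroups, compute $\Out_\calF(P)$ via Alperin's fusion theorem, verify $l(kC_G(P)e_P)=1$ from nilpotency or centrality of the defect group, and then apply Lemmas~\ref{lem 1}, \ref{lem 2}, \ref{lem 4} and Theorem~\ref{thm mult formula}, with Olsson's results handling part~(f). The additional detail you supply is accurate (and in part~(d) your computation $C_D(T_3^i)=Z(T_3^i)=Z(D)$ is what the argument actually requires; the paper's phrase ``$C_D(P)=P$'' there is a slip carried over from the dihedral case).
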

\begin{proof}
(a) Since the automorphism group $\Aut(D)$ is a $2$-group (see Remark~\ref{rem Aut quaternion}),  the Sylow axiom for saturated fusion systems implies that $\Out_\calF(D)=1$. Moreover, since $\Out(D)$ is an abelian group (again by Remark~\ref{rem Aut quaternion}), the module $V$ has dimension one. The result follows now from Lemma~\ref{lem 1}.

\smallskip
(b) We will use Lemma~\ref{lem 2}. By arguments similar to the proof of Proposition~\ref{prop dihedral}{\rm (b)}, one shows that there are precisely two $\calF$-isomorphism classes of subgroups of $D$ isomorphic to $Q_{2^m}$. Let $Q_{2^m}\cong P\le D$. Since the quaternion subgroups of order $8$ are the only $\calF$-essential subgroups of $D$, Alperin's fusion theorem implies that $\Aut_\calF(P)$ is given by conjugations with elements from $N_D(P)$. But $N_D(P)$ is isomorphic to a generalized quaternion subgroup of $D$ of order $2^{m+1}$. Thus, $\Out_{\calF}(P)\cong C_2$. The non-trivial element of $\Out_{\calF}(P)$ is the class of the conjugation automorphism with any element of $N_D(P)$ which is not in $P$. We may choose this element to be an element of order $2^m$. Note that this element centralizes the cyclic subgroup of order $2^{m-1}$ of $P$. Therefore, under any isomorphism $P\cong L$ the corresponding element of $L$ will again centralize the cyclic subgroup of $L$ of order $2^{m-1}$. It follows that the class of the conjugation with this element is equal to $\epsilon$ as described in the result. 
Finally, by choosing $P$ to be fully $\calF$-centralized, we can see that $C_D(P)=Z(D)$ is a defect group of the group-block pair $(C_G(P),e_P)$. Since $Z(D)=Z(P)$ is central in $C_G(P)$, we obtain $l\big(kC_G(P)e_P\big)=1$. The result now follows from Lemma~\ref{lem 2}, and the fact that $V$ is one-dimensional, since $\Out(L)$ is abelian.

\smallskip
(c) The proof of this part is similar to the proof of Proposition~\ref{prop dihedral}{\rm (c)}. One shows that there is a unique subgroup $P$ of $D$ isomorphic to $L$, that $kC_G(P)e_P$ is nilpotent and hence $l\big(kC_G(P)e_P\big)=1$, and that $\Out_\calF(P)\cong N_D(P)/PC_D(P)\cong C_2$ with the non-trivial element given by the inversion automorphism of $P$. The result follows again from Lemma~\ref{lem 2}. 

\smallskip
(d) We will use Lemma~\ref{lem 4}. By Alperin's fusion theorem, the two conjugacy classes of subgroups of $D$ isomorphic to $Q_8$ are also the $\calF$-isomorphism classes. We may choose $Q_8\cong P\le D$ to be fully $\calF$-centralized. Then $C_D(P)=P$ is a defect group of $(C_G(P), e_P)$ and it is central in $C_G(P)$. Thus, $l\big(kC_G(P)e_P\big)=1$.  The result now follows from  Theorem~\ref{thm mult formula} and Lemma~\ref{lem 4}.

\smallskip
(e) Let $P$ be a subgroup of $D$ isomorphic to $C_4$. We may choose $P$ to be fully $\calF$-centralized. Then $(C_G(P), e_P)$ has a cyclic defect group $C_D(P)$. Since $p=2$, it follows that $kC_G(P)e_P$ is nilpotent and we obtain $l\big(kC_G(P)e_P\big)=1$. Moreover, $\Out_\calF(P)=\Out(P)$. Lemma~\ref{lem 2} implies that $m\big(S_{C_4,1,\FF_-},\FF T^\Delta_{(G,b)}\big)$ is equal to zero and that $m\big(S_{C_4,1,\FF},\FF T^\Delta_{(G,b)}\big)$ is equal to the number of $\calF$-isomorphism classes of subgroups of $D$ isomorphic to $C_4$. The result follows from Remark~\ref{noth quaternion}.

\smallskip
(f) The center $Z=Z(D)$ of $D$ is the unique subgroup of $D$ isomorphic to $C_2$.  Therefore $Z$ is fully $\calF$-centralized and $D$ is a defect group of $(C_G(Z), e_Z)$. Moreover, the fusion system of $(C_G(Z),e_Z)$ is isomorphic to $\calF$. It follows from \cite[Section~3]{Olsson1975} that $l\big(kC_G(Z)e_Z\big)=l(kGb)$.  Theorem~\ref{thm mult formula} and \cite[Corollary~8.23]{BoucYilmaz2022} imply that $m\big(S_{C_2,1,\FF},\FF T^\Delta_{(G,b)}\big)=l\big(kC_G(Z)e_Z\big)=l(kGb)=m\big(S_{\{1\},1,\FF},\FF T^\Delta_{(G,b)}\big)$.  The result follows again from \cite[Section~3]{Olsson1975}.  
\end{proof}

The same reasoning as at the end of Section~4 shows now that Proposition~\ref{prop quaternion}, together with the result in \cite{Yilmaz2024} for $Q_8$, implies the part of Theorem~\ref{thm main} concerning generalized quaternion defect groups.


\section{Blocks with semidihedral defect groups}

Throughout this section we assume that $(G,b)$ is a group-block pair with defect group $D$ isomorphic to $SD_{2^n}=\langle s,t\mid s^{2^{n-1}}=t^2=1,t^{-1}st=s^{2^{n-2}-1}\rangle$, the semidihedral group of order $2^n$, with $n\ge 4$.

\begin{remark}\label{rem Aut semidihedral}
In the group $SD_{2^n}$ as above, for any integer $a$, the order of $ts^a$ is equal to $2$ if $a$ is even and to $4$ if $a$ is odd. Thus any automorphism of $SD_{2^n}$ must map $t$ to an element of the form $ts^a$ with even $a$. It is probably well-known and easy to verify that every automorphism of $SD_{2^n}$ is of the form $f_{a,b}$ with $f_{a,b}(s)=s^b$ and $f_{a,b}(t)=ts^a$ with integers $a,b$ such that $a$ is even and $b$ is odd. One obtains an isomorphism
\begin{equation*}
   2\ZZ/2^{n-1}\ZZ\rtimes(\ZZ/2^{n-1}\ZZ)^\times \to \Aut(SD_{2^n})\,,\quad (\abar,\bbar)\mapsto f_{a,b}\,,
\end{equation*}
where in the semidirect product, the multiplicative group $(\ZZ/2^{n-1}\ZZ)^\times$ acts by multiplication on the additive group $2\ZZ/2^{n-1}\ZZ$. It is straightforward to verify that conjugation with the element $t^ls^k$, for integers $k$ and $l$, is equal to the automorphism $f_{(2^{n-2}-2)k, (2^{n-2}-1)^l}$. 
Since the element $(2^{n-2}-2)+2^{n-1}\ZZ$ generates the additive group $2\ZZ/2^{n-1}\ZZ$, the group $\Inn(SD_{2^n})$ corresponds under the above isomorphism to the subgroup $2\ZZ/2^{n-1}\ZZ\rtimes \langle 2^{n-2}-1+2^{n-1}\ZZ\rangle$. Thus, the above isomorphism induces an isomorphism
\begin{equation*}
   \Out(SD_{2^n}) \cong (\ZZ/2^{n-1}\ZZ)^\times/\langle 2^{n-2}-1+2^{n-1}\ZZ\rangle\,.
\end{equation*}
In particular, $\Aut(SD_{2^n})$ is a $2$-group and $\Out(SD_{2^n})$ is an abelian $2$-group. Note that the element $2^{n-2}-1+2^{n-1}\ZZ$ has order $2$ in the multiplicative group $(\ZZ/2^{n-1}\ZZ)^\times$. It is well-known that the latter group is the direct product of the subgroups generated by the classes of the elements $-1$ and $5$. 
Moreover, the class of $5$ generates a subgroup of index $2$ in $(\ZZ/2^{n-1}\ZZ)^\times$. Thus, $(\ZZ/2^{n-1}\ZZ)^\times$ has precisely 3 elements of order $2$, namely the classes of $-1$, and  $2^{n-2}\pm1$. One can show that in the direct product decomposition
$(\ZZ/2^{n-1}\ZZ)^\times = \langle-1+2^{n-1}\ZZ\rangle \times \langle 5+2^{n-2}\ZZ\rangle$
the element $2^{n-2}-1$ has non-trivial components in both factors. In fact, it is clearly not contained in the first factor and every element of the second factor is congruent to $1$ modulo $4$. This implies that $\Out(SD_{2^n})$ is a cyclic group of order $2^{n-3}$, generated by the class of the automorphism $f_{0,5}$.
\end{remark}

\begin{nothing}\label{noth semidihedral}
(a) Similar to \ref{noth dihedral}(a) and \ref{noth quaternion}(a), we define the following subgroups of $SD_{2^n}$:
\begin{equation*}
   S_m:=\langle s_m\rangle \quad\text{with $s_m:=s^{2^{n-m-1}}$,}
\end{equation*}
a cyclic subgroup of order $2^m$ for $0\le m\le n-1$, and
\begin{equation*}
   T^1_m:=\langle s_{m-1},t\rangle\quad \text{for $1\le m\le n-1$} \quad\text{and}\quad T^2_m:=\langle s_{m-1}, st\rangle\quad \text{for $2\le m\le n-1$.}
\end{equation*}
Then the subgroups $SD_{2^n}$, $S_m$ ($0\le m\le n-1$),  $T^1_m$ ($1\le m\le n-1$) and $T^2_m$ ($2\le m\le n-1$), form a complete set of representatives of the conjugacy classes of subgroups of $Q_{2^n}$. Moreover, $T^1_m$, for $3\le m\le n-1$ (resp. $T^2_m$, for $4\le m\le n-1$),  is isomorphic to a dihedral group (resp. generalized quaternion group) of order $2^{m}$, $T^2_3$ is isomorphic to a quaternion group of order $8$, $T^1_2$ is isomorphic to a Klein-four group and $T^2_2$ is isomorphic to a cyclic group of order $4$. For $i=1,2$ and $2\le m\le n-2$, one has $N_{SD_{2^n}}(T_m^i)=T_{m+1}^i$.

\smallskip
(b) Up to isomorphism, there exist four saturated fusion systems on $D$ (see \cite[Example~3.8]{AKO2011} and \cite[Theorem~5.3]{CravenGlesser2012}). Following again \cite{AKO2011} we denote these fusion systems by $\calF_{ij}$, for $i,j\in\{0,1\}$, where $i=0$ (resp. ~$i=1$) means the fusion system with two (resp. ~one) isomorphism classes of involutions and $j=0$ (resp. ~$j=1$) means the fusion systems with two (resp. ~one) isomorphism classes of cyclic groups of order $4$.  The fusion system $\calF_{00}$, $\calF_{01}$, $\calF_{10}$ and $\calF_{11}$ can be realized, respectively, as the fusion system of the group $D$, of the principal blocks of $\GL(2,q)$ for any odd prime power $q$ such that $(q+1)_2=2^{n-2}$, of the principal blocks of $\PSL(2,q^2)\rtimes C_2$, the non-split extension by the field automorphism, for any odd prime power $q$ such that $(q+1)_2=2^{n-2}$, and of the principal blocks of $\PSL(3,q)$ for any odd prime power $q$ such that $(q+1)_2=2^{n-2}$. One has
\begin{equation*}
   \Aut_{\calF_{0j}}(T_2^1)\cong C_2\,, \text{ and } \ \Aut_{\calF_{1j}}(T_2^1)\cong S_3\, \, \text{for } j=1,2;
\end{equation*}
and
\begin{equation*}
   \Aut_{\calF_{i0}}(T_3^2)\cong C_2\,, \text{ and } \ \Aut_{\calF_{i1}}(T_3^2)\cong S_3\, \, \text{for } i=1,2\,.
\end{equation*}
For each of the four fusion systems, the essential subgroups of $D$ are precisely the subgroups isomorphic to Klein-four groups and to quaternion groups of order $8$.
\end{nothing}

\begin{proposition}\label{prop semidihedral}
Let $(G,b)$ be a group-block pair, let $(D,e_D)$ a maximal $(G,b)$-Brauer pair and let $\calF$ denote the fusion system of $(G,b)$ with respect to $(D,e_D)$.
Suppose that the defect group $D$ of $(G,b)$ is isomorphic to $SD_{2^n}$, the semidihedral group of order $2^n$.

\smallskip
{\rm (a)} Let $L=SD_{2^n}$. Then, for any simple $\FF\Out(L)$-module $V$, one has $m\big(S_{L,1,V},\FF T^\Delta_{(G,b)}\big)=1$. 

\smallskip
{\rm (b)} Let $L=D_{2^m}$ for $3\le m\le n-1$ and let $\epsilon\in \Out(L)$ be the class of the automorphism of $L$ that is the identity on the cyclic subgroup of order $2^{m-1}$ and switches the two conjugacy classes of non-central involutions of $L$. Then for any simple $\FF\Out(L)$-module $V$, one has
\begin{equation*}
   m\big(S_{L,1,V},\FF T^\Delta_{(G,b)}\big) = 
   \begin{cases}
      1\,, & \text{if $\epsilon\in\ker(V)$;}\\
      0\,, & \text{if $\epsilon\notin\ker(V)$.}
   \end{cases}
\end{equation*}

\smallskip
{\rm (c)} Let $L=C_{2^m}$ be a cyclic group of order $2^m$ with $3\le m\le n-1$, and let $V$ be a simple $\FF\Out(L)$-module. Moreover, let $\epsilon\in\Out(L)$ be the class of the inversion automorphism $x\mapsto x^{-1}$ of $L$. Then
\begin{equation*}
   m\big(S_{L,1,V},\FF T^\Delta_{(G,b)}\big) =
   \begin{cases}
      1\,, & \text{if $\epsilon\in\ker(V)$;}\\
      0\,, & \text{if $\epsilon\notin\ker(V)$.}
   \end{cases}
\end{equation*}

\smallskip
{\rm (d)}  Suppose that $n>4$.  Let $L=Q_{2^m}$ with $4\le m\le n-1$ and let $\epsilon\in\Out(L)$ be the class of the automorphism of $L$ that is the identity on the cyclic subgroup of order $2^{m-1}$ and switches the two conjugacy classes of subgroups of order $4$  of $L$ not contained in the cyclic subgroup of order $2^{m-1}$. Then, for any simple $\FF\Out(L)$-module $V$, one has
\begin{equation*}
   m\big(S_{L,1,V},\FF T^\Delta_{(G,b)}\big) = 
   \begin{cases}
      1\,, & \text{if $\epsilon\in\ker(V)$;}\\
      0\,, & \text{if $\epsilon\notin\ker(V)$.}
   \end{cases}
\end{equation*}

\smallskip
{\rm (e)} Let $L=Q_8$ and let $a_1$, $a_2$, $a_3$, $a_4$ denote the multiplicities of $S_{Q_8, 1,\FF}$, $S_{Q_8,1,\FF_-}$, $S_{Q_8,1,V_2}$, $S_{Q_8,u_0,\FF}$ in $\FF T^\Delta_{(G,b)}$, respectively (see Remark~\ref{rem simple functors}(d)). Then
\begin{equation*}
   (a_1,a_2,a_3,a_4)=
   \begin{cases}
      (1,0,1,0)\,, & \text{if $\calF\cong \calF_{00}$;}\\
      (1,0,1,0)\,, & \text{if $\calF\cong \calF_{10}$;}\\
      (1,0,0,1)\,, & \text{if $\calF\cong \calF_{01}$;}\\
      (1,0,0,1)\,, & \text{if $\calF\cong \calF_{11}$.}\\
   \end{cases}
\end{equation*}

\smallskip
{\rm (f)} Let $L=V_4$ and let $a_1$, $a_2$, $a_3$, $a_4$ denote the multiplicities of $S_{V_4, 1,\FF}$, $S_{V_4,1,\FF_-}$, $S_{V_4,1,V_2}$, $S_{V_4,u_0,\FF}$ in $\FF T^\Delta_{(G,b)}$, respectively (see Remark~\ref{rem simple functors}(c)). Then
\begin{equation*}
   (a_1,a_2,a_3,a_4)=
   \begin{cases}
      (1,0,1,0)\,, & \text{if $\calF\cong \calF_{00}$;}\\
      (1,0,0,1)\,, & \text{if $\calF\cong \calF_{10}$;}\\
      (1,0,1,0)\,, & \text{if $\calF\cong \calF_{01}$;}\\
      (1,0,0,1)\,, & \text{if $\calF\cong \calF_{11}$.}\\
   \end{cases}
\end{equation*}

\smallskip
{\rm (g)} Let $L=C_4$. Then
\begin{equation*}
m\big(S_{C_4,1,\FF_-}, \FF T^\Delta_{(G,b)}\big)=0\quad \text{and}\quad  m\big(S_{C_4,1,\FF}, \FF T^\Delta_{(G,b)}\big)=
   \begin{cases}
      2\,, & \text{if $\calF\cong \calF_{00}$;}\\
      2\,, & \text{if $\calF\cong \calF_{10}$;}\\
      1\,, & \text{if $\calF\cong \calF_{01}$;}\\
      1\,, & \text{if $\calF\cong \calF_{11}$.}\\
   \end{cases}
\end{equation*}

\smallskip
{\rm (h)} Let $L=C_2$. Then
\begin{equation*}
m\big(S_{C_2,1,\FF}, \FF T^\Delta_{(G,b)}\big)=
   \begin{cases}
      2\,, & \text{if $\calF\cong \calF_{00}$;}\\
      3\,, & \text{if $\calF\cong \calF_{01}$;}\\
      1\,, & \text{if $\calF\cong \calF_{10}$;}\\
      2\,, & \text{if $\calF\cong \calF_{11}$.}\\
   \end{cases}
\end{equation*}

\smallskip
{\rm (i)} Let $L=\{1\}$. Then
\begin{equation*}
m\big(S_{1,1,\FF}, \FF T^\Delta_{(G,b)}\big)=
   \begin{cases}
      1\,, & \text{if $\calF\cong \calF_{00}$;}\\
      2\,, & \text{if $\calF\cong \calF_{01}$;}\\
      2\,, & \text{if $\calF\cong \calF_{10}$;}\\
      3\,, & \text{if $\calF\cong \calF_{11}$.}\\
   \end{cases}
\end{equation*}
\end{proposition}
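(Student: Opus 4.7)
The plan is to imitate the proofs of Propositions~\ref{prop dihedral} and \ref{prop quaternion}, treating each part by the same four-step recipe: (i) list the $\calF$-isomorphism classes of subgroups $P\le D$ isomorphic to $L$, using \ref{noth semidihedral}(a) and Alperin's fusion theorem together with the fact from \ref{noth semidihedral}(b) that the $\calF$-essential subgroups of $D$ are exactly the Klein-four and $Q_8$ subgroups; (ii) determine $\Out_\calF(P)$ for a representative in each class; (iii) verify that $l\big(kC_G(P)e_P\big)=1$ for a fully $\calF$-centralized representative, usually because $C_D(P)$ is central in $C_G(P)$ or generates a block with cyclic defect at $p=2$; and (iv) invoke whichever of Lemmas~\ref{lem 1}--\ref{lem 4} matches the triple $(L,u,V)$, or Theorem~\ref{thm mult formula} directly when none applies.

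Part (a) follows from Lemma~\ref{lem 1} exactly as in the dihedral/quaternion cases, since $\Aut(D)$ is a $2$-group and $\Out(D)$ is cyclic by Remark~\ref{rem Aut semidihedral}. Parts (b), (c), (d) use Lemma~\ref{lem 2}; the novelty against Propositions~\ref{prop dihedral}(b) and \ref{prop quaternion}(b) is that each of $D_{2^m}$, $C_{2^m}$, $Q_{2^m}$ occurs in $SD_{2^n}$ as a single $D$-conjugacy class of subgroups (coming from only one of the two branches $T^1_m$ or $T^2_m$), so the multiplicity in Lemma~\ref{lem 2} is $1$ rather than $2$, with $\Out_\calF(P)\cong C_2$ generated by the stated $\epsilon$. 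Parts (e) and (f) apply Lemmas~\ref{lem 4} and~\ref{lem 3} to the unique $\calF$-classes $T^2_3\cong Q_8$ and $T^1_2\cong V_4$, with $\Aut_\calF$ read off from \ref{noth semidihedral}(b). For (g) there are two $D$-conjugacy classes of $C_4$-subgroups, $S_2$ and $T^2_2$, which merge in $\calF$ iff $j=1$; then $\Out_\calF(P)=\Out(P)\cong C_2$ acts by inversion, $l=1$ because $C_D(P)$ is cyclic, and Lemma~\ref{lem 2} gives $0$ for the sign character and the number of classes for the trivial character. Part (i) is $l(kGb)$ by \cite[Corollary~8.23]{BoucYilmaz2022}, with the four values $1,2,2,3$ coming from Olsson's classical determination of the semidihedral block invariants.

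I expect the main obstacle to lie in part (h). Since $\Aut(C_2)=1$, Theorem~\ref{thm mult formula} reduces the multiplicity to $\sum_P l\big(kC_G(P)e_P\big)$ summed over $\calF$-isomorphism classes of involutions of $D$; these classes number two when $i=0$ and one when $i=1$. For the central involution $Z$, the block $(C_G(Z),e_Z)$ has defect group $D$ and fusion system $C_\calF(Z)$, and the key step is the observation that when $\calF\cong\calF_{1j}$ the $S_3$-action on $T^1_2$ fuses $Z$ with the two non-central involutions, so the stabilizer of $Z$ inside that $S_3$ is only $C_2$. Combined with the unchanged fusion on $T^2_3$ (every automorphism of $T^2_3\cong Q_8$ fixes its center $Z$), this identifies $C_\calF(Z)$ with a semidihedral fusion system of type $\calF_{0j}$, and therefore $l(kC_G(Z)e_Z)$ equals $1$ or $2$ rather than $l(kGb)$. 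For non-central involutions $v$ (arising as a separate class only when $i=0$), $C_D(v)=T^1_2$ is Klein-four and the $C_2$-automorphism swaps $v$ with $vz$, so the centralizer fusion at $v$ is trivial and $(C_G(v),e_v)$ is nilpotent with $l=1$. Summing these contributions in each case yields the four values in (h), and the remaining difficulty is purely in tracking this ``descent of fusion type'' $\calF_{1j}\rightsquigarrow\calF_{0j}$ consistently. Once (h) is settled, the part of Theorem~\ref{thm main} for semidihedral defect groups follows as in Sections~4 and~5.
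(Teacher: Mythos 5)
Your proposal is correct and follows essentially the same route as the paper's proof: parts (a)--(g) and (i) are handled by pointing to the corresponding parts of Propositions~\ref{prop dihedral} and~\ref{prop quaternion}, while part (h) is worked out directly from Theorem~\ref{thm mult formula} by splitting on whether the two $D$-classes of involutions are fused in $\calF$, invoking Olsson for the $l$-values, and using the fusion-system descent $C_\calF(Z)\cong\calF_{0j}$ when $\calF\cong\calF_{1j}$. The only difference is cosmetic: you spell out \emph{why} the descent $\calF_{1j}\rightsquigarrow\calF_{0j}$ holds (stabilizer of $Z$ inside the $S_3$ on $T^1_2$ is $C_2$, and fusion on $T^2_3$ is unchanged since automorphisms of $Q_8$ fix its center), a point the paper states without justification.
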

\begin{proof}
The proof of this proposition closely follows the arguments in Propositions~\ref{prop dihedral} and~\ref{prop quaternion}: Part~(a) is similar to \ref{prop dihedral}(a) and \ref{prop quaternion}(a); Part~(b) to \ref{prop dihedral}(b); Part~(c) to \ref{prop dihedral}(c) and \ref{prop quaternion}(c); Part~(d) to \ref{prop quaternion}(b); Part~(e) to \ref{prop quaternion}(d); Part~(f) to \ref{prop dihedral}(d); and Part~(g) to \ref{prop quaternion}(e). Since $m\big(S_{1,1,\FF},\FF T^\Delta_{(G,b)}\big)=l(kGb)$, see \cite[Corollary~8.23]{BoucYilmaz2022}, Part~(i) follows from  \cite[Section~3]{Olsson1975}. Finally to prove Part~(h), let $L=C_2$. If $\calF\cong \calF_{0i}$, for $i=1,2$, then there are two $\calF$-isomorphism classes of subgroups of $D$ isomorphic to $C_2$. Let $C_2\cong P\le D$ be a non-central subgroup and $Z=Z(D)$ be the center of $D$. Then $P$ and $Z$ are the representatives of these classes. By Theorem~\ref{thm mult formula}, one has $m\big(S_{C_2,1,\FF},\FF T^\Delta_{(G,b)}\big)=l\big(kC_G(P)e_P\big)+l\big(kC_G(Z)e_Z\big)$. We may choose $P$ to be fully $\calF$-centralized. Then the group-block pair $(C_G(P),e_P)$ has a defect group $C_D(P)\cong V_4$ and the trivial fusion system. It follows that $l\big(kC_G(P)e_P\big)=1$. The group-block pair $(C_G(Z), e_Z)$ has a defect group $D$ and the fusion system $\calF\cong \calF_{0i}$. It follows from \cite[Section~3]{Olsson1975} that $l\big(kC_G(Z)e_Z\big)=l(kGb)$ and the result follows in this case. If $\calF\cong \calF_{1i}$, for $i=1,2$, then there is only one $\calF$-isomorphism class of subgroups of $D$ isomorphic to $C_2$. The center $Z=Z(D)$ is a fully $\calF$-centralized representative of the $\calF$-isomorphism class. It follows that $(C_G(Z), e_Z)$ has a defect group $D$ and the fusion system $\calF_{0i}$. By Theorem~\ref{thm mult formula}, one has $m\big(S_{C_2,1,\FF},\FF T^\Delta_{(G,b)}\big)=l\big(kC_G(Z)e_Z\big)$ and the result follows again from \cite[Section~3]{Olsson1975}. 
\end{proof}

The same reasoning as at the end of Section~4 shows now that Proposition~\ref{prop semidihedral} implies the part of Theorem~\ref{thm main} concerning semidihedral defect groups. The proof of Theorem~\ref{thm main} is now complete.

\medskip
\noindent\textbf{Acknowledgment} \ The first and second authors gratefully acknowledge the hospitality of the Mathematics Department at Bilkent University during visits in 2023, 2024 and 2025. The third author is supported by the Scientific and Technological Research Council of T{\"u}rkiye (T{\"U}B{\.I}TAK) under the 3501 Career Development Program with Project No. 123F456.



\centerline{\rule{5ex}{.1ex}}
\begin{flushleft}
Robert Boltje, Department of Mathematics, University of California, Santa Cruz, 95064, California, USA.\\
{\tt boltje@ucsc.edu} \vspace{1ex}\\
Serge Bouc, CNRS-LAMFA, Universit\'e de Picardie, 33 rue St Leu, 80039, Amiens, France.\\
{\tt serge.bouc@u-picardie.fr}\vspace{1ex}\\
Deniz Y\i lmaz, Department of Mathematics, Bilkent University, 06800 Ankara, Turkey.\\
{\tt d.yilmaz@bilkent.edu.tr}
\end{flushleft}
\end{document}